\title{\LARGE \bf
%
Consensus seeking gradient descent flows on boundaries of convex sets}
\author{Johan Markdahl
\thanks{This work is supported by the UL project OptBioSys.}
\thanks{J. Markdahl is with the Luxembourg Centre for Systems Biomedicine at the University of Luxembourg. Email: 
    {\tt\small markdahl@kth.se}}%
}
\renewcommand\thmcontinues[1]{Continued}
\newcommand*\textmathversion{\csname textmv@\math@version\endcsname}
\newcommand*\textmv@normal{m}
\newcommand*\textmv@bold{b}
\newcommand{\ve}[2][]{\ensuremath{\boldsymbol{\mathrm{#2}}}_{#1}}
\newcommand{\vet}[2][]{\ensuremath{\smash{\boldsymbol{\mathrm{#2}}^{\!\top}_{#1}}}}
\newcommand{\vd}[2][]{\ensuremath{\dot{\boldsymbol{\mathrm{#2}}}_{#1}}}
\newcommand{\ma}[2][]{\ensuremath{\boldsymbol{\mathrm{#2}}}_{#1}}
\newcommand{\mat}[2][]{\ensuremath{\boldsymbol{\mathrm{#2}}^{\!\top}_{#1}}}
\newcommand{\R}{\ensuremath{\mathds{R}}}
\newcommand{\C}{\ensuremath{\mathds{C}}}
\newcommand{\ie}{\textit{i.e.}, }
\newcommand{\eg}{\textit{e.g.}, }
\newcommand{\mtr}{\hspace{-0.3mm}\ensuremath{^\top}}
\newcommand\raiseT[2]{\setbox0\hbox{$#1{#2}$}\raise\dp0\box0}
\newcommand{\V}{\ensuremath{\mathcal{V}}}
\newcommand{\E}{\ensuremath{\mathcal{E}}}
\newcommand{\M}{\ensuremath{\mathcal{M}}}
\newcommand{\etc}{\emph{etc.\ }}
\DeclareMathOperator{\trace}{\ensuremath{\mathrm{tr}}}
\newcommand{\Ni}{\ensuremath{\mathcal{N}_i}}
\newcommand{\raisemath}[1]{\mathpalette{\raisem@th{#1}}}
\newcommand{\raisem@th}[3]{\raisebox{#1}{$#2#3$}}
\newcommand{\ts}[2][]{\ensuremath{\mathsf{T}_{#2}#1}}
\newcommand{\AGAS}{\textsc{agas}}
\definecolor{kthbluergb}{RGB}{25,84,166}
\definecolor{kthbluecmyk}{cmyk}{1,0.55,0,0}
\definecolor{kthblueA}{RGB}{25,84,166}
\definecolor{kthblueB}{RGB}{46,124,192}
\definecolor{kthblueC}{RGB}{112,153,209}
\definecolor{kthblueD}{RGB}{164,186,225}
\definecolor{kthblueE}{RGB}{211,220,241}
\newcounter{counter} 
\newtheorem{theorem}[counter]{Theorem}
\newtheorem{lemma}[counter]{Lemma}
\newtheorem{proposition}[counter]{Proposition}
\newtheorem{definition}[counter]{Definition}
\newtheorem{myalgorithm}[counter]{Algorithm}
\newtheorem{assumption}[counter]{Assumption}
\newcounter{parentnumber}
\let\save@mathaccent\mathaccent
\newcommand*\if@single[3]{%
	\setbox0\hbox{${\mathaccent"0362{#1}}^H$}%
	\setbox2\hbox{${\mathaccent"0362{\kern0pt#1}}^H$}%
	\ifdim\ht0=\ht2 #3\else #2\fi
}
\newcommand*\rel@kern[1]{\kern#1\dimexpr\macc@kerna}
\newcommand*\widebar[1]{\@ifnextchar^{{\wide@bar{#1}{0}}}{\wide@bar{#1}{1}}}
\newcommand*\wide@bar[2]{\if@single{#1}{\wide@bar@{#1}{#2}{1}}{\wide@bar@{#1}{#2}{2}}}
\newcommand*\wide@bar@[3]{%
	\begingroup
	\def\mathaccent##1##2{%
		\let\mathaccent\save@mathaccent
		\if#32 \let\macc@nucleus\first@char \fi
		\setbox\z@\hbox{$\macc@style{\macc@nucleus}_{}$}%
		\setbox\tw@\hbox{$\macc@style{\macc@nucleus}{}_{}$}%
		\dimen@\wd\tw@
		\advance\dimen@-\wd\z@
		\divide\dimen@ 3
		\@tempdima\wd\tw@
		\advance\@tempdima-\scriptspace
		\divide\@tempdima 10
		\advance\dimen@-\@tempdima
		\ifdim\dimen@>\z@ \dimen@0pt\fi
		\rel@kern{0.6}\kern-\dimen@
		\if#31
		\overline{\rel@kern{-0.6}\kern\dimen@\macc@nucleus\rel@kern{0.4}\kern\dimen@}%
		\advance\dimen@0.4\dimexpr\macc@kerna
		\let\final@kern#2%
		\ifdim\dimen@<\z@ \let\final@kern1\fi
		\if\final@kern1 \kern-\dimen@\fi
		\else
		\overline{\rel@kern{-0.6}\kern\dimen@#1}%
		\fi
	}%
	\macc@depth\@ne
	\let\math@bgroup\@empty \let\math@egroup\macc@set@skewchar
	\mathsurround\z@ \frozen@everymath{\mathgroup\macc@group\relax}%
	\macc@set@skewchar\relax
	\let\mathaccentV\macc@nested@a
	\if#31
	\macc@nested@a\relax111{#1}%
	\else
	\def\gobble@till@marker##1\endmarker{}%
	\futurelet\first@char\gobble@till@marker#1\endmarker
	\ifcat\noexpand\first@char A\else
	\def\first@char{}%
	\fi
	\macc@nested@a\relax111{\first@char}%
	\fi
	\endgroup
}
\begin{document}

\maketitle
\thispagestyle{empty}
\pagestyle{empty}


\begin{abstract}


Consensus on nonlinear spaces is of use in many control applications. This paper proposes a gradient descent flow algorithm for consensus on hypersurfaces. We show that if an inequality holds, then the system converges for almost all initial conditions and all connected graphs. The inequality involves the hypersurface Gauss map and the gradient and Hessian of the implicit equation. Moreover, for the inequality to hold, it is necessary that the manifold is the boundary of a convex set. The literature already contains an algorithm for consensus on hypersurfaces. That algorithm on any ellipsoid is equivalent to our algorithm on the unit sphere. In particular, that algorithm achieves almost global synchronization on ellipsoids. These findings suggest that strong convergence results for consensus seeking gradient descent flows may be established on manifolds that are the boundaries of convex sets.


%
\end{abstract}


\section{Introduction}

\noindent Consensus on nonlinear spaces is of interest in many application  areas including robotics \cite{song2017intrinsic}, flocking \cite{al2018gradient}, opinion dynamics \cite{aydogdu2017opinion}, machine learning \cite{crnkic2018swarms}, and quantum synchronization \cite{ha2019second}. The problem of almost global consensus on nonlinear spaces is interesting from an applied point of view since it makes the probability of reaching consensus from a random initial condition independent of the number of agents. It is also interesting from a theoretical perspective since the global geometry and topology is what differentiates a Riemannian manifold $\M$ from Euclidean space $\R^m$. This paper explores how a consensus seeking gradient descent flow algorithm being almost globally convergent depends on the geometry and topology of the manifold it evolves on.

Consider a consensus seeking gradient descent flow of a disagreement function on a manifold $\M$ \cite{sarlette2009consensus}. Global convergence results are known for some special cases. For example, the consensus manifold $\mathcal{C}$ is almost globally asymptotically stable (\AGAS) for all connected networks over spheres of dimension $n\geq2$ \cite{markdahl2018tac}. There is hence at least one \AGAS{} consensus protocol on the boundary of every compact, star-shaped set for $n\geq2$, obtained by lifting the protocol on the corresponding $n$-sphere to $\M$. Another example; a necessary condition states that $\mathcal{C}$ cannot be \AGAS{} if $\M$ is simply connected \cite{markdahl2019kuramoto}. Since the boundary of any compact convex set of dimension $n$ in $\R^m$ is homeomorphic to the $n$-sphere in $\R^{n+1}$, the boundaries of compact convex sets are simply connected for $n\geq2$. Moreover, the boundary of any compact convex set can be described as a hypersurface in $\R^{n+1}$.

Working with arbitrary manifolds is difficult. Like spheres, hypersurfaces can be characterized by a single constraint, wherefore the methodology of \cite{markdahl2018tac,markdahl2020high} can be applied. For technical reasons we limit consideration to closed analytic hypersurfaces, \ie hypersurfaces that are analytic, compact and without boundaries. This paper provides a sufficient condition for $\mathcal{C}$ to be \AGAS{} for all connected networks over such hypersurfaces. The condition can only be satisfied if the hypersurface is the boundary of a convex set. However, because the condition is based on a quadratic Taylor expansion of the disagreement function, it cannot be used for manifolds on which the quadratic term vanishes. The question concerning the boundary of any convex set hence remains unresolved.

There is another algorithm for consensus on hypersurfaces in the literature \cite{zhu2013synchronization,zhu2014high,zhang2019synchronisation}. Compared to our algorithm, it is more restricted in terms of the hypersurfaces it can be applied to. We show that this algorithm on any ellipsoid is equivalent to our algorithm on the unit sphere.  Almost global convergence on ellipsoid have been established for graphs that are either complete or acyclic \cite{zhu2014high}. The result for our algorithm on networks over the sphere \cite{markdahl2018tac} also applies to the algorithm \cite{zhu2014high} on networks over ellipsoids, showing that it converges almost globally for all connected graphs.

\section{Preliminaries}



\noindent The boundary $\partial\mathcal{S}$ of any compact convex set $\mathcal{S}\subset\R^m$, $\dim\partial\mathcal{S}=n<m$, can be transformed into a hypersurface in $\R^{n+1}$ by a change of coordinates. For technical reasons we focus on closed analytic hypersurfaces. A closed analytic hypersurface $\M\subset\R^{n+1}$ can without loss of generality be characterized as a set on the form
\begin{align*}
\M=\{\ve{y}\in\R^{n+1}\,|\,c(\ve{y})=0\},
\end{align*}
where $c:\R^{n+1}\rightarrow\R$ is an analytic function. The Jordan-Brouwer  theorem implies that $\M$ separates the set on which $c$ is positive from the set on which $c$ is negative \cite{lima1988jordan}. One of the sets is bounded while the other is unbounded. If the gradient $\nabla c(\ve{y})$ is considered as a vector located at $\ve{y}$, then it points towards the region on which $c$ is positive. This set can be assumed to be unbounded without loss of generality.


A hypersurface $\M$ is called nonsingular if $\nabla c(\ve{y})\neq\ve{0}$ for all $\ve{y}\in\M$. Assume that $\M$ is nonsingular. Let 
\begin{align*}
\ve{n}(\ve{y})=\tfrac{\nabla c(\ve{y})}{\|\nabla c(\ve{y})\|}
\end{align*}
denote the unit normal obtained from the Gauss map $\ve{n}:\M\rightarrow\mathcal{S}^{n}$. The projection $\Pi:\R^n\rightarrow\ts[\M]{\ve{y}}$ on the tangent space of $\M$ at $\ve{y}$ is given by
\begin{align*}
\ve{z}\mapsto\left(\ma[n+1]{I}-\ve{n}(\ve{y})\vet{n}\!(\ve{y})\right)\ve{z},
\end{align*}
where the Gram-Schmidt rule $\ve{n}\perp\ve{z}-\langle\ve{z},\ve{n}\rangle\ve{n}$ is used to cancel the normal component of $\ve{z}$. This expression allows us to calculate the gradient $\Pi\nabla f(\ve{y})$ of $f(\ve{y})$ on $\M$ as
\begin{align*}
\Pi\nabla f(\ve{y})&=(\ma[n+1]{I}-\tfrac{\nabla c(\ve{y})}{\|\nabla c(\ve{y})\|}(\tfrac{\nabla c(\ve{y})}{\|\nabla c(\ve{y})\|})\mtr)\nabla f(\ve{y}),
\end{align*}
where $\nabla f(\ve{y})$ is the Euclidean gradient of $f(\ve{y})$ in $\R^{n+1}$.

%
%


A set is almost globally asymptotically stable if almost all system trajectories converge to it:


\begin{definition}[\AGAS]\label{def:AGAS}
A Lyapunov stable equilibrium set $\mathcal{S}$ of a dynamical system $\vd{x}=\ve{f}(\ve{x})$ on a Riemannian manifold $(\M,g)$, where $\M\subset\R^{n+1}$, is said to be \AGAS{} if  $\lim_{\ve{x}\rightarrow\infty}\ve{x}(t)\in\mathcal{S}$ for all $\ve{x}(0)\in\M\backslash\mathcal{N}$, where $\mathcal{N}$ has Riemannian measure zero.
\end{definition}



\section{Distributed control design}

\noindent We use a graph $\mathcal{G}=(\mathcal{V},\mathcal{E})$ to model interactions between agents. Each node $i\in\V$ corresponds to an agent and each edge $\{i,j\}\in\mathcal{E}$ corresponds to a pair of communicating agents. The graph is assumed to be connected. Items associated with agent $i$ carry the subindex $i$; we denote the state of agent $i$ by $\ma[i]{x}\in\M$, the normal of $\M$ at $\ve[i]{x}$ by $\ve[i]{n}$, the projection onto the tangent space of $\M$ at $\ve[i]{x}$ by $\Pi_i$, the neighbor set of agent $i$ by $\mathcal{N}_i=\{j\in\V\,|\,\{i,j\}\in\E\}$, the Euclidean gradient of $V$ with respect to $\ve[i]{x}$ by $\nabla_i V$ \etc We call $\ve{x}=(\ve[i]{x})_{i=1}^N\in\M^N$ a configuration of agents.

Consider a dynamical system defined on $\M^N$. The dynamics of agent $i$ could \eg be
\begin{align}
\vd[i]{x}=\ve[i]{u}\label{eq:xd1}
\end{align}
where $\ve[i]{u}\in\ts[\M]{i}$ is the control signal. Another option is 
\begin{align}
\vd[i]{x}=\Pi_i\ve[i]{u}=(\ma[n+1]{I}-\ve[i]{n}\vet[i]{n})\ve[i]{u}\label{eq:xd2}
\end{align}
where $\ve[i]{u}\in\R^{n+1}$ and $\ve[i]{n}=\ve{n}(\ve[i]{x})$ is introduced for the sake of notational convenience. Note that the right-hand sides of \eqref{eq:xd1} and \eqref{eq:xd2} belong to $\ts[\M]{\ve[i]{x}}$. Suppose that $\ve[i]{x}(0)\in\M$, that $\ve[i]{u}$ is Lipschitz, and that $\M$ is a $C^2$ manifold. Then $\ve[i]{x}(t)\in\M$ for all $t\in\R$ by the Bony-Brezis theorem since $\langle\ve{v},\nabla_i c(\ve[i]{x})\rangle=0$ for all $\ve{v}\in\ts[\M]{\ve[i]{x}}$ and all $\ve[i]{x}\in\M$. By confining $\vd[i]{x}$ to $\ts[\M]{i}$, we confine $\ve[i]{x}$ to $\M$. 

The input model \eqref{eq:xd1} corresponds to a situation where the constraint $\ve[i]{x}\in\M$ is adopted to accomplish a task whereas the model \eqref{eq:xd2} refers to the case where the mechanical design of a systems constrains it to only be actuated in a certain fashion. An example of \eqref{eq:xd1} is a team of satellites in orbit; they could leave the orbit if so desired. Examples of \eqref{eq:xd2} include camera sensor networks where each camera is mounted on a spherical joint. The orientation of camera $i$ is always some $\ve[i]{x}\in\mathcal{S}^2$ regardless of the control input.

The goal of consensus seeking systems is for the agents to asymptotically approach the consensus manifold
\begin{align}
\mathcal{C}=\{(\ve{x})_{i=1}^N\in\M^N\}.\label{eq:C}
\end{align}
The set $\mathcal{C}$ is a manifold $\mathcal{C}\simeq\M$ by the diffeomorphism $\M^N\rightarrow\mathcal{C}:(\ve{x})_{i=1}^N\mapsto\ve{x}$. If the agents are satellites in orbit that satisfy $\ve[i]{x}\in\mathcal{S}^2$, then this would be interpreted as all $N$ agents meeting up at one point. If the agents are rigid bodies whose pointing direction (reduced attitude) is modelled as $\ve[i]{x}\in\mathcal{S}^2$, then a consensus implies that all $N$ bodies are pointing in the same direction. 

As a measure of the distance to consensus, consider the disagreement function $V:\M^N\rightarrow\R$ given by
\begin{align}\label{eq:V}
V(\ve{x})=\tfrac12\sum_{i,j\in\E}a_{ij}\|\ve[j]{x}-\ve[i]{x}\|^2,
\end{align}
where $a_{ij}\in[0,\infty)$.
Clearly, $V=0$ if and only if $\ve{x}=(\ve[i]{x})_{i=1}^N\in\mathcal{C}$, \ie no disagreement. The consensus seeking algorithm that we study in this paper is the gradient descent flow of \eqref{eq:V}. The gradient of $V$ on $\M$ is given by
\begin{align*}
\Pi_i\nabla_iV&=\left(\ma[n]{I}-\ve[i]{n}\vet[i]{n}\right)\sum_{j\in\Ni} a_{ij}(\ve[i]{x}-\ve[j]{x}).
\end{align*}

We are now ready to state the main algorithm of this paper. This algorithm previously appears in \cite{sarlette2009consensus}, although their work is limited to the case when the norm of the states are constant, $\|\ve[i]{x}\|=k$, \ie the case when $\M$ is a sphere. Moreover, they only show local stability results.
\begin{myalgorithm}\label{algo:main}
The consensus seeking gradient descent flow on $\M$ is given by
\begin{align}
\vd[i]{x}&=-\Pi_i\nabla_iV=\left(\ma{I}-\ve[i]{n}\vet[i]{n}\right)\sum_{j\in\Ni}a_{ij}( \ve[j]{x}-\ve[i]{x}).\label{eq:flow}
\end{align}
\end{myalgorithm}

Suppose that $\M$ is closed, then the solution $\ve{x}(t)=(\ve[i]{x})_{i=1}^N$ to \eqref{eq:flow} is unique and exists for all $t\in\R$ \cite{khalil2002nonlinear}.

There is another algorithm for consensus on hypersurfaces in the literature:
\begin{myalgorithm}[Zhu \cite{zhu2014high}]\label{algo:Zhu}
The consensus seeking algorithm on $\M$ is given by
\begin{align}
\vd[i]{x}&=\left(\ma{I}-\tfrac{\ve[i]{x}\nabla c(\ve[i]{x})\mtr}{\langle\ve[i]{x},\nabla c(\ve[i]{x})\rangle}\right)\sum_{j\in\Ni}a_{ij}( \ve[j]{x}-\ve[i]{x})\nonumber\\
&=\left(\ma{I}-\tfrac{\ve[i]{x}\nabla c(\ve[i]{x})\mtr}{\langle\ve[i]{x},\nabla c(\ve[i]{x})\rangle}\right)\sum_{j\in\Ni}a_{ij}\ve[j]{x}\label{eq:zhu}
\end{align}
\end{myalgorithm}
\vspace{2mm}

To briefly compare Algorithm \ref{algo:main} and \ref{algo:Zhu}, note that Algorithm \ref{algo:main} requires that $\nabla c(\ve{x})\neq0$ on $\M$ whereas Algorithm \ref{algo:Zhu} also requires $\langle\ve{x},\nabla c(\ve{x})\rangle\neq0$. The two algorithms are identical when $\nabla c(\ve{x})=k\ve{x}$ for some $k\in\R$, \ie when $\M$ is $\mathcal{S}^n$. Indeed, both algorithms are conceived of as generalizations of a consensus algorithm on the $n$-sphere \cite{olfati2006swarms,lohe2010quantum}. In general, it may be more difficult to establish convergence of Algorithm \ref{algo:Zhu} since it is not a gradient descent flow. We provide the following result about Algorithm \ref{algo:Zhu}:

\begin{proposition}
The system \eqref{eq:zhu} on an ellipsoid is equivalent to the system \eqref{eq:flow} on the unit sphere.
\end{proposition}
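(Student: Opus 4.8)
The plan is to exhibit a linear change of variables, applied to every agent's state, that carries the ellipsoid onto the unit sphere and conjugates \eqref{eq:zhu} to \eqref{eq:flow}. Write the ellipsoid in the normal form $\M=\{\ve{y}\in\R^{n+1}\,|\,\vet{y}\ma{A}\ve{y}=1\}$ with $\ma{A}$ symmetric positive definite, so that $c(\ve{y})=\vet{y}\ma{A}\ve{y}-1$ gives $\nabla c(\ve{y})=2\ma{A}\ve{y}$ and $\langle\ve{y},\nabla c(\ve{y})\rangle=2\vet{y}\ma{A}\ve{y}=2$ for every $\ve{y}\in\M$. Two consequences are immediate: the denominator appearing in \eqref{eq:zhu} is the nonzero constant $2$, so Algorithm~\ref{algo:Zhu} is well defined on any such ellipsoid; and, using the symmetry of $\ma{A}$ so that $(\ma{A}\ve[i]{x})\mtr=\vet[i]{x}\ma{A}$, the flow \eqref{eq:zhu} reduces to $\vd[i]{x}=(\ma{I}-\ve[i]{x}\vet[i]{x}\ma{A})\sum_{j\in\Ni}a_{ij}\ve[j]{x}$.

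Next I would factor $\ma{A}=\mat{B}\ma{B}$ with $\ma{B}$ invertible, for instance taking $\ma{B}$ to be the symmetric square root of $\ma{A}$ or a Cholesky factor, and set $\ve[i]{z}=\ma{B}\ve[i]{x}$. Because $\vet{y}\ma{A}\ve{y}=\|\ma{B}\ve{y}\|^2$, the map $\ve{y}\mapsto\ma{B}\ve{y}$ restricts to a diffeomorphism of $\M$ onto the unit sphere $\mathcal{S}^n=\{\ve{z}\in\R^{n+1}\,|\,\vet{z}\ve{z}=1\}$; it is the restriction of a linear isomorphism of $\R^{n+1}$, hence smooth with smooth inverse. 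Differentiating $\ve[i]{z}=\ma{B}\ve[i]{x}$ along the reduced form of \eqref{eq:zhu}, the one computation that matters is the conjugation identity $\ma{B}(\ma{I}-\ve[i]{x}\vet[i]{x}\ma{A})=(\ma{I}-\ve[i]{z}\vet[i]{z})\ma{B}$, which follows from $\ma{B}\ve[i]{x}\vet[i]{x}\ma{A}=\ma{B}\ve[i]{x}\vet[i]{x}\mat{B}\ma{B}=(\ma{B}\ve[i]{x})(\ma{B}\ve[i]{x})\mtr\ma{B}=\ve[i]{z}\vet[i]{z}\ma{B}$. Substituting, $\vd[i]{z}=\ma{B}\vd[i]{x}=(\ma{I}-\ve[i]{z}\vet[i]{z})\sum_{j\in\Ni}a_{ij}\ma{B}\ve[j]{x}=(\ma{I}-\ve[i]{z}\vet[i]{z})\sum_{j\in\Ni}a_{ij}\ve[j]{z}$.

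Finally I would note that on $\mathcal{S}^n$ one has $(\ma{I}-\ve[i]{z}\vet[i]{z})\ve[i]{z}=\ve{0}$, so replacing $\ve[j]{z}$ by $\ve[j]{z}-\ve[i]{z}$ inside the sum leaves the right-hand side unchanged, giving $\vd[i]{z}=(\ma{I}-\ve[i]{z}\vet[i]{z})\sum_{j\in\Ni}a_{ij}(\ve[j]{z}-\ve[i]{z})$. Since the Gauss map of $\mathcal{S}^n$ is $\ve{n}(\ve{z})=\ve{z}$, this is exactly \eqref{eq:flow} specialized to $\M=\mathcal{S}^n$, with the same graph $\mathcal{G}$ and weights $a_{ij}$. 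Applying $\ve{y}\mapsto\ma{B}\ve{y}$ coordinate by coordinate therefore defines a diffeomorphism $\M^N\to(\mathcal{S}^n)^N$ whose pushforward sends the vector field of \eqref{eq:zhu} to that of \eqref{eq:flow}; it maps solutions to solutions, equilibria to equilibria, and the consensus manifold $\mathcal{C}$ to the consensus manifold, so the two systems are equivalent and any stability statement for one transfers to the other.

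I do not expect a genuine obstacle here: the argument reduces to the single algebraic identity $\ma{B}(\ma{I}-\ve[i]{x}\vet[i]{x}\ma{A})=(\ma{I}-\ve[i]{z}\vet[i]{z})\ma{B}$ together with the observation that on a centered ellipsoid the normalization $\langle\ve{x},\nabla c(\ve{x})\rangle$ is a nonzero constant. The one point worth stating carefully is that the normal form is taken centered at the origin; a translated ellipsoid is not carried to the sphere by a linear change of variables preserving the structure of \eqref{eq:zhu}, because the term $\tfrac{\ve[i]{x}\nabla c(\ve[i]{x})\mtr}{\langle\ve[i]{x},\nabla c(\ve[i]{x})\rangle}$ appearing there depends on $\ve[i]{x}$ itself and not only on the direction of $\nabla c$.
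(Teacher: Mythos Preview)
Your proposal is correct and follows essentially the same argument as the paper: write the ellipsoid via a positive definite form $\ma{A}$, factor $\ma{A}$ (the paper uses a Cholesky factor, you allow either a Cholesky factor or the symmetric square root), apply the linear change of variables agentwise, and verify that this conjugates \eqref{eq:zhu} to \eqref{eq:flow} on $\mathcal{S}^n$. Your write-up is in fact slightly more explicit than the paper's, isolating the key conjugation identity $\ma{B}(\ma{I}-\ve[i]{x}\vet[i]{x}\ma{A})=(\ma{I}-\ve[i]{z}\vet[i]{z})\ma{B}$ and noting the centered-ellipsoid caveat, but the substance is the same.
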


\begin{proof}
Let $\M$ be an ellipsoid, \ie
\begin{align*}
\M=\{\ve{y}\in\R^n\,|\,c(\ve{y})=\tfrac12\langle\ve{y},\ma{A}\ve{y}\rangle-1=0\},
\end{align*}
where $\ma{A}$ is a positive definite matrix. The dynamics \eqref{eq:zhu} of the consensus seeking system on $\M$ under Algorithm \ref{algo:Zhu} is
\begin{align*}
\vd[i]{y}=(\ma{I}-\tfrac{1}{\langle\ve[i]{y},\ma{A}\ve[i]{y}\rangle}\ve[i]{y}\vet[i]{y}\ma{A})\sum_{\{i,j\}\in\E}a_{ij}(\ve[j]{y}-\ve[i]{y}).
\end{align*}
Let $\ma{L}$ denote the Cholesky factor of $\ma{A}$, \ie $\ma{A}=\ma{L}\mat{L}$. Introduce $\ma[i]{z}=\mat{L}\ve[i]{y}$ and note that $\|\ma[i]{z}\|^2=\langle\ve[i]{y},\ma{A}\ve[i]{y}\rangle=1$, \ie $\ve[i]{z}\in\mathcal{S}^n$. Calculate
\begin{align*}
\vd[i]{z}&=\mat{L}(\ma{I}-\tfrac{1}{\langle\ve[i]{y} ,\ma{L}\mat{L}\ve[i]{y}\rangle}\ve[i]{y}\vet[i]{y}\ma{L}\mat{L})\sum_{j\in\Ni}a_{ij}(\ve[j]{y}-\ve[i]{y})\\
&=(\ma{I}-\tfrac{\ve[i]{z}}{\|\ve[i]{z} \|}(\tfrac{\ve[i]{z}}{\|\ve[i]{z}\|})\mtr)\sum_{j\in\Ni}a_{ij}(\ve[j]{z}-\ve[i]{z})\\
&=(\ma{I}-\ve[i]{z}\vet[i]{z})\sum_{j\in\Ni}a_{ij}\ve[j]{z},
\end{align*}
which is the system \eqref{eq:flow} on the unit sphere.
\end{proof}


\section{Almost global asymptotic stability}

\noindent The main result of this paper states that for any closed, analytic manifold that satisfies a geometric condition, the consensus manifold $\mathcal{C}$ is an \AGAS{} equilibrium manifold of the gradient descent flow \eqref{eq:flow}. In the derivation of the main result, the condition appears as an expression which relates the relative information $\ve[j]{x}-\ve[i]{x}$ for any $\{i,j\}\in\E$ at an equilibria of the system to some geometric quantities evaluated at $\ve[i]{x}$ and $\ve[j]{x}$. However, it is difficult to say which pairs of points are part of an equilibrium and which are not. As such, we make the conservative requirement that the condition is satisfied at any pair of points $\ve{y},\,\ve{z}\in\M$.

Let $\mathcal{Q}$ denote the set of all equilibria of the gradient descent flow \eqref{eq:flow} that does not belong to the consensus manifold  $\mathcal{C}$ given by \eqref{eq:C}. Most of this section is concerned with establishing that each equilibria in $\mathcal{Q}$ is unstable; a result which is summarized in Proposition \ref{prop:eigenvalues}. This leads us to sufficient conditions for $\mathcal{C}$ to be an \AGAS{} set of equilibria of the gradient descent  flow \eqref{eq:flow}. Before that we establish Proposition \ref{prop:local} which shows that the consensus manifold $\mathcal{C}$ given by \eqref{eq:C} is asymptotically stable as a set. Note that Proposition \ref{prop:local} only requires $\M$ to be a closed analytic manifold, \ie a compact analytic manifold without boundary.

\subsection{Local stability}

\begin{proposition}\label{prop:local}
Let $\M\subset\R^m$ be a closed, analytic, embedded Riemannian manifold. The consensus manifold $\mathcal{C}=\{(\ve{x})_{i=1}^N\in\M^N\}$ is an asymptotically stable equilibrium set of the gradient descent flow $\vd{x}=-(\Pi_i\nabla_i V)_{i=1}^N$ and
\begin{align*}
V(\ve{x})&=\tfrac12\sum_{\{i,j\}\in\E}\|\ve[j]{x}-\ve[i]{x}\|^2.
\end{align*}
\end{proposition}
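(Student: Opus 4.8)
The plan is to exhibit $V$ as a Lyapunov function for the consensus manifold $\mathcal{C}$ on the product manifold $\M^N$, using LaSalle's invariance principle together with the analyticity of $\M$ to upgrade convergence to a point of $\mathcal{C}$. First I would note that $V\geq 0$ on $\M^N$ with equality exactly on $\mathcal{C}$, so $V$ is positive definite with respect to the set $\mathcal{C}$. Along trajectories of the gradient descent flow $\vd{x}=-(\Pi_i\nabla_i V)_{i=1}^N$ we have
\begin{align*}
\dot V = \sum_{i=1}^N \langle \nabla_i V,\vd[i]{x}\rangle = -\sum_{i=1}^N \langle \nabla_i V,\Pi_i\nabla_i V\rangle = -\sum_{i=1}^N \|\Pi_i\nabla_i V\|^2 \leq 0,
\end{align*}
using that $\Pi_i$ is the orthogonal projection onto $\ts[\M]{i}$ (so it is symmetric and idempotent, giving $\langle \nabla_i V,\Pi_i\nabla_i V\rangle = \|\Pi_i\nabla_i V\|^2$). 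Hence $V$ is nonincreasing. Since $\M$ is closed (compact), $\M^N$ is compact, so all sublevel sets of $V$ are compact and forward invariant, and solutions exist for all $t\geq 0$; this is already noted in the excerpt for $\M$ closed.

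Next I would apply LaSalle: every trajectory approaches the largest invariant set contained in $\{\ve{x}\in\M^N : \dot V(\ve{x})=0\}$, which is precisely the equilibrium set $\{\ve{x} : \Pi_i\nabla_i V = 0 \text{ for all } i\}$ of the flow. This does not immediately give convergence to $\mathcal{C}$, since there may be equilibria outside $\mathcal{C}$ (the set $\mathcal{Q}$ in the excerpt). To conclude asymptotic stability of $\mathcal{C}$ \emph{as a set}, I would argue local stability separately: near $\mathcal{C}$, $V$ serves as a strict Lyapunov function in the sense that its only critical points with small value lie on $\mathcal{C}$ — for a suitably small neighborhood $U$ of $\mathcal{C}$, the sublevel set $\{V<\varepsilon\}\cap U$ contains no equilibria off $\mathcal{C}$, so trajectories starting there converge to $\mathcal{C}$. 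This requires showing that $\mathcal{Q}$ is bounded away from $\mathcal{C}$, i.e.\ $\inf_{\ve{x}\in\mathcal{Q}} V(\ve{x}) > 0$; this follows because $\mathcal{Q}$ is a closed set (it is the intersection of the closed equilibrium set with the closed complement of an open neighborhood of $\mathcal{C}$, or one argues directly) disjoint from the compact set $\mathcal{C}$, and $V$ is continuous and vanishes only on $\mathcal{C}$. Combining Lyapunov stability (from $\dot V\le 0$ and $V$ positive definite w.r.t.\ $\mathcal{C}$) with attractivity on a neighborhood yields asymptotic stability of $\mathcal{C}$ as a set.

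Finally, to make the convergence statement clean (each trajectory converges to a single point of $\mathcal{C}$ rather than merely to the set), I would invoke the \L ojasiewicz gradient inequality, which is available precisely because $\M$ is analytic and $V$ is a polynomial hence analytic on $\M^N$: any bounded gradient-flow trajectory of an analytic function on an analytic manifold converges to a single critical point. The main obstacle I anticipate is the set-level bookkeeping in the local stability step — namely verifying carefully that $\mathcal{Q}$ is closed and therefore separated from $\mathcal{C}$ in the value of $V$, so that a genuine forward-invariant neighborhood of $\mathcal{C}$ free of spurious equilibria exists; the rest ($\dot V\le 0$, compactness, LaSalle, \L ojasiewicz) is routine. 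An alternative to the \L ojasiewicz route, if one only wants set stability, is to stop after LaSalle plus the separation argument, which already delivers the asymptotic stability of $\mathcal{C}$ claimed in the proposition.
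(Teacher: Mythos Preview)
Your overall architecture matches the paper's: show $\dot V=-\sum_i\|\Pi_i\nabla_i V\|^2\leq0$ for Lyapunov stability, then argue that the non-consensus equilibria $\mathcal{Q}$ are separated from $\mathcal{C}$ in the value of $V$, so that the sublevel set $\{V<q\}$ with $q=\inf_{\mathcal{Q}}V$ is a forward-invariant neighborhood of $\mathcal{C}$ containing no other critical points, and trajectories starting there must converge to $\mathcal{C}$.

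The gap is precisely the separation step, which you correctly flag as the main obstacle but do not actually resolve. Your claim that $\mathcal{Q}$ is closed---``the intersection of the closed equilibrium set with the closed complement of an open neighborhood of $\mathcal{C}$''---is circular: it presupposes that $\mathcal{Q}$ already avoids some open neighborhood of $\mathcal{C}$, which is exactly the point at issue. A priori nothing prevents a sequence of non-consensus critical points from accumulating on $\mathcal{C}$; the full equilibrium set is closed, but deleting $\mathcal{C}$ from it need not leave a closed set, and compactness alone gives no control.

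The paper closes this gap with the \L{}ojasiewicz inequality, but uses it \emph{for the separation itself}, not merely for pointwise convergence as you propose. Near each $\ve{x}\in\mathcal{C}$ there is a ball $\mathcal{B}(\ve{x})$, an $\alpha<1$, and $k>0$ with $V(\ve{y})^\alpha\leq k\|\Pi\nabla V(\ve{y})\|$ on $\mathcal{B}(\ve{x})$; any critical point $\ve{y}$ in this ball therefore has $V(\ve{y})=0$, hence $\ve{y}\in\mathcal{C}$. This yields $\mathcal{Q}\cap\mathcal{B}(\ve{x})=\emptyset$ for every $\ve{x}\in\mathcal{C}$, and a subsequence argument on the compact $\M^N$ then gives $q>0$. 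So the analyticity hypothesis is doing essential work exactly at the step you left open; your invocation of \L{}ojasiewicz should be moved from the optional pointwise-convergence remark into the core of the separation argument.
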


\begin{proof}
The potential function of a gradient descent flow decreases with time, 
\begin{align}\label{eq:Vd}
\dot{V}=\langle\Pi_i\nabla V,\vd{x}\rangle=-\|\Pi_i\nabla V\|^2.
\end{align}
Since $V\geq 0$ with $V=0$ if and only if $\ve{x}\in\mathcal{C}$, we can take $V$ as a Lyapunov function and conclude that $\mathcal{C}$ is stable.

Since $\M$ is closed, the gradient descent flow converges to a connected component of the set of critical points of $V$ \cite{helmke2012optimization}. By \eqref{eq:Vd}, any sublevel set of $V$ is forward invariant. Moreover, all sublevel sets contain $\mathcal{C}$. If there is an open sublevel set of $V$ which does not intersect $\mathcal{Q}$, then there is an open neighborhood of $\mathcal{C}$ from which $\ve{x}$ converges to $\mathcal{C}$.

Since $V$ is analytic it satisfies the \L{}ojasiewicz inequality on Riemannian manifolds \cite{kurdyka2000proof}. For every $\ve{x}\in\mathcal{C}$ there is an open ball $\mathcal{B}(\ve{x})$, an $\alpha<1$, and a $k>0$ such that 
\begin{align*}
V(\ve{y})^\alpha\leq k\|\Pi \nabla V(\ve{y})\|
\end{align*}
for all $\ve{y}\in\mathcal{B}(\ve{x})$. If $\ve{y}\in\mathcal{Q}$, then $\Pi\nabla V(\ve{y})=\ve{0}$ whereby $V(\ve{y})=0$, which implies  $\ve{y}\in\mathcal{C}$, a contradiction. Hence $\mathcal{Q}\cap\mathcal{B}(\ve{x})=\emptyset$.

Consider the value of $q=\inf_{\ve{x}\in\mathcal{Q}}V(\ve{x})$. If $q=0$, then there is a sequence $\{\ve[k]{x}\}_{k=1}^\infty$ such that $\lim_{k\rightarrow\infty}V(\ve[k]{x})=0$. Since $\M$ is a closed manifold, the sequence $\{\ve[k]{x}\}_{k=1}^\infty$ has a subsequence which converges to some $\ve{y}\in\M$. Moreover, $V(\ve{y})=0$ whereby $\ve{y}\in\mathcal{C}$. For each $\varepsilon>0$ there must be a $\ve{z}(\varepsilon)\in\mathcal{Q}$ (an element of the subsequence) such that $\|\ve{y}-\ve{z}(\varepsilon)\|<\varepsilon$. This contradicts  $\mathcal{Q}\cap\mathcal{B}(\ve{y})=\emptyset$. Hence $q>0$ and all trajectories that start in the level set $\{\ve{x}\in\M\,|\,V(\ve{x})<q\}$ converges to $\mathcal{C}$. \end{proof}

%
%


This result is similar to Proposition 7 in \cite{sarlette2009consensus}. Note however that our proof of local stability only makes use of the properties of gradient descent flows of analytic function on closed manifolds. To show that $\mathcal{C}$ is  \AGAS{} we also need to consider the geometry and topology of $\M$. In particular, $\mathcal{C}$ is not an \AGAS{} equilibrium manifold of \eqref{eq:flow} if $\M$ is a multiply connected hypersurface such as a circle or a torus \cite{markdahl2019kuramoto}. The sufficient condition for \AGAS{} established in this paper places requirements on $\M$ that exclude such cases.

\subsection{Main result}

\noindent Our main result is establishes almost global convergence to the consensus manifold if the following assumption holds:
\begin{assumption}\label{ass:ineq}
	Suppose $c$ satisfies
	\begin{align*}
	\langle\ve{n}(\ve{y}),\ve{n}(\ve{z})\rangle^2+\tfrac{\langle \ve{y}-\ve{z},\nabla c(\ve{y})\rangle(\Delta c(\ve{y})-\langle\ve{n}(\ve{y}),\nabla^2c(\ve{y})\ve{n}(\ve{y})\rangle)}{\|\nabla c(\ve{y})\|^2}\geq1,
	\end{align*}
	for all $\ve{y},\ve{z}\in\M$ and with equality only if $\ve{y}=\ve{z}$, where $\ve{n}:\M\rightarrow\mathcal{S}^{n-1}$ is the Gauss map and $\Delta$ is the Laplace-Beltrami operator, $\Delta c(\ve{y})=\trace\nabla^2c(\ve{y})$.
\end{assumption}

\begin{theorem}\label{th:main}
	Let $c$ be a real analytic function that satisfies Assumption \ref{ass:ineq}. The consensus manifold is an \AGAS{} equilibrium set of the gradient descent flow
	\begin{align*}
	V&=\tfrac12\sum_{\{i,j\}\in\E}a_{ij}\|\ve[i]{x}-\ve[j]{x}\|^2,\\
	\vd[i]{x}&=-\nabla_i V\\
	&=(\ma{I}-\tfrac{\nabla c(\ve[i]{x})}{\|\nabla c(\ve[i]{x})\|}(\tfrac{\nabla c(\ve[i]{x})}{\|\nabla c(\ve[i]{x})\|})\mtr)\!\sum_{\{i,j\}\in\E}\!a_{ij}(\ve[j]{x}-\ve[i]{x}),
	\end{align*}
	on the $N$-fold product of $\M=\{\ve{y}\in\R^n\,|\,c(\ve{y})=0\}$.
	%
\end{theorem}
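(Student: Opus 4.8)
\emph{Proof plan.}
The plan is to combine Proposition~\ref{prop:local} with an instability analysis of the non-consensus equilibria $\mathcal{Q}$. By Proposition~\ref{prop:local} the set $\mathcal{C}$ is Lyapunov stable; moreover, since $\M^N$ is closed and $V$ is analytic, the \L{}ojasiewicz inequality guarantees that every trajectory converges to a single critical point of $V$, and, using $q=\inf_{\ve{x}\in\mathcal{Q}}V(\ve{x})>0$ from the proof of Proposition~\ref{prop:local}, $\mathcal{Q}$ is compact and hence, being real analytic, has finitely many connected components. It therefore suffices to prove that the Riemannian Hessian of $V$ has a negative eigenvalue at every $\ve{x}^*\in\mathcal{Q}$ --- the content of the forthcoming Proposition~\ref{prop:eigenvalues} --- because the center-stable manifold theorem, applied to each component, then shows that the set of initial conditions that fail to converge to $\mathcal{C}$ is a finite union of sets of Riemannian measure zero, which is precisely the \AGAS{} property.

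Fix $\ve{x}^*=(\ve[i]{x}^*)_{i=1}^N\in\mathcal{Q}$. Since $\Pi_i\nabla_i V(\ve{x}^*)=\ve{0}$, the Euclidean gradient $\nabla_i V(\ve{x}^*)=\sum_{j\in\Ni}a_{ij}(\ve[i]{x}^*-\ve[j]{x}^*)$ is normal to $\M$ at $\ve[i]{x}^*$, so $\nabla_i V(\ve{x}^*)=\mu_i\ve[i]{n}$ with $\mu_i=\langle\ve[i]{n},\nabla_i V(\ve{x}^*)\rangle$. For a tangent variation $\ve{w}=(\ve[i]{w})_{i=1}^N$ carried by curves $\ve[i]{x}(s)\subset\M$ with $\vd[i]{x}(0)=\ve[i]{w}$, twice differentiating $V$ gives $\sum_{\{i,j\}\in\E}a_{ij}(\|\ve[i]{w}-\ve[j]{w}\|^2+\langle\ve[i]{x}^*-\ve[j]{x}^*,\ddot{\ve[i]{x}}(0)-\ddot{\ve[j]{x}}(0)\rangle)$; collecting the coefficient of $\ddot{\ve[i]{x}}(0)$ produces $\nabla_i V(\ve{x}^*)=\mu_i\ve[i]{n}$, while twice differentiating $c(\ve[i]{x}(s))=0$ gives $\langle\ve[i]{n},\ddot{\ve[i]{x}}(0)\rangle=-\langle\ve[i]{w},\nabla^2 c(\ve[i]{x}^*)\ve[i]{w}\rangle/\|\nabla c(\ve[i]{x}^*)\|$. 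The second variation is therefore independent of the chosen curves, and
\[
\Hess V(\ve{x}^*)[\ve{w},\ve{w}]=\sum_{\{i,j\}\in\E}a_{ij}\|\ve[i]{w}-\ve[j]{w}\|^2-\sum_{i=1}^N\frac{\mu_i}{\|\nabla c(\ve[i]{x}^*)\|}\langle\ve[i]{w},\nabla^2 c(\ve[i]{x}^*)\ve[i]{w}\rangle.
\]

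To find a negative direction I would average this quadratic form over the family $\ve[i]{w}=\Pi_i\ve{v}$, $\ve{v}\in\mathcal{S}^{n-1}\subset\R^n$, using that the $\ve{v}$-average of $\ve{v}\vet{v}$ equals $\tfrac1n\ma{I}$. Then the average of $\|\Pi_i\ve{v}-\Pi_j\ve{v}\|^2$ is $\tfrac2n(1-\langle\ve[i]{n},\ve[j]{n}\rangle^2)$ and the average of $\langle\ve[i]{w},\nabla^2 c(\ve[i]{x}^*)\ve[i]{w}\rangle$ is $\tfrac1n\trace(\Pi_i\nabla^2 c(\ve[i]{x}^*))=\tfrac1n(\Delta c(\ve[i]{x}^*)-\langle\ve[i]{n},\nabla^2 c(\ve[i]{x}^*)\ve[i]{n}\rangle)$. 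Substituting $\mu_i/\|\nabla c(\ve[i]{x}^*)\|=\sum_{j\in\Ni}a_{ij}\langle\ve[i]{x}^*-\ve[j]{x}^*,\nabla c(\ve[i]{x}^*)\rangle/\|\nabla c(\ve[i]{x}^*)\|^2$ and splitting the double sum over ordered pairs of neighbours, $n$ times the averaged Hessian equals
\[
\sum_{\{i,j\}\in\E}a_{ij}\big[(1-L(\ve[i]{x}^*,\ve[j]{x}^*))+(1-L(\ve[j]{x}^*,\ve[i]{x}^*))\big],
\]
where $L(\ve{y},\ve{z})$ denotes the left-hand side of the inequality in Assumption~\ref{ass:ineq}. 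By that assumption every bracketed term is $\leq0$, and since $\ve{x}^*\notin\mathcal{C}$ and $\G$ is connected there is an edge $\{i,j\}\in\E$ with $\ve[i]{x}^*\neq\ve[j]{x}^*$, for which the inequality is strict. Hence the averaged Hessian is strictly negative, so $\Hess V(\ve{x}^*)$ is negative along some $(\Pi_i\ve{v})_{i=1}^N\neq\ve{0}$; thus $\ve{x}^*$ is unstable, and the reduction in the first paragraph concludes the proof.

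\emph{Expected main obstacle.} The crux is the pair of intertwined computations: the second variation of $V$ on $\M^N$, so that at an equilibrium only the second fundamental form term $\langle\ve[i]{w},\nabla^2 c(\ve[i]{x}^*)\ve[i]{w}\rangle/\|\nabla c(\ve[i]{x}^*)\|$ is left over from the normal acceleration, and the averaging step, which must reassemble the curvature and multiplier contributions into exactly the function $L(\ve{y},\ve{z})$ of Assumption~\ref{ass:ineq}; the key identities are $\trace(\Pi_i\nabla^2 c)=\Delta c-\langle\ve[i]{n},\nabla^2 c\,\ve[i]{n}\rangle$ and the emergence of the factor $\langle\ve{y}-\ve{z},\nabla c(\ve{y})\rangle/\|\nabla c(\ve{y})\|^2$, with the right sign, from the equilibrium multiplier $\mu_i$. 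A secondary technical point is passing from pointwise instability of each $\ve{x}^*\in\mathcal{Q}$ to the measure-zero conclusion for the whole, possibly non-isolated, set $\mathcal{Q}$, which uses the analyticity of $V$ (finite stratification of the critical set and convergence of trajectories to points) together with a stable-manifold argument applied componentwise.
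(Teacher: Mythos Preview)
Your proposal is correct and follows essentially the same route as the paper. The heart of both arguments is identical: at a non-consensus equilibrium $\ve{x}^*$, perturb every agent by the tangential projection $\Pi_i\ve{v}$ of a common ambient vector $\ve{v}$, compute the resulting second variation of $V$, and show that its \emph{trace} over $\ve{v}$ reproduces exactly the sum $\sum_{\{i,j\}\in\E}a_{ij}\big[(1-L(\ve[i]{x}^*,\ve[j]{x}^*))+(1-L(\ve[j]{x}^*,\ve[i]{x}^*))\big]$, which Assumption~\ref{ass:ineq} makes strictly negative. The paper packages this via the Lagrangian $\mathcal{L}=V+\sum_i\lambda_i c(\ve[i]{x})$ and extracts the trace by evaluating the Rayleigh quotient at $\ve{u}=\sum_k\ve[k]{u}/\|\sum_k\ve[k]{u}\|$ (the normalized sum of eigenvectors of the compressed matrix $\ma{M}$); you compute the Riemannian Hessian directly from the second variation along curves and extract the trace by averaging over $\ve{v}\in\mathcal{S}^{n}$. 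These are two presentations of the same computation, and your identities $\trace(\Pi_i\nabla^2c\,\Pi_i)=\Delta c-\langle\ve[i]{n},\nabla^2c\,\ve[i]{n}\rangle$ and $\trace(\Pi_i-\Pi_j)^2=2(1-\langle\ve[i]{n},\ve[j]{n}\rangle^2)$ are precisely the ones the paper uses.

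The one place where your write-up is slightly weaker than the paper is the reduction from ``every $\ve{x}^*\in\mathcal{Q}$ has an unstable direction'' to the measure-zero conclusion. The paper invokes a ready-made result (Freeman, \cite{freeman2013global}): for a pointwise convergent system, the region of attraction of any set of exponentially unstable equilibria has measure zero; pointwise convergence comes from $V$ being analytic on a closed analytic manifold \cite{lageman2007convergence}. Your version (finitely many analytic components of $\mathcal{Q}$, then the center--stable manifold theorem ``applied to each component'') is morally the same but needs more care when a component is a positive-dimensional continuum rather than an isolated point, since the classical center--stable manifold theorem is stated for a single equilibrium. Citing the pointwise-convergence\,/\,exponential-instability result directly, as the paper does, closes this cleanly without the component analysis. (Minor indexing remark: since $\M\subset\R^{n+1}$, your averaging vector should range over $\mathcal{S}^{n}\subset\R^{n+1}$, giving $\tfrac{1}{n+1}\ma{I}$ rather than $\tfrac{1}{n}\ma{I}$; this only changes the harmless overall constant.)
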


Since the proof of the main result is somewhat long, we have broken it into parts. First, we need two definitions.

\begin{definition}
Let $\Sigma$ be a dynamical system on $\mathcal{S}\subset\R^{n+1}$ whose solution $\ma{\Phi}(t;\ve{x})$, $\ma{\Phi}(0;\ve{x})=\ve{x}$ exists for all $t\in\R$ and all $\ve{x}\in\mathcal{S}$. The system $\Sigma$ is said to be pointwise convergent if for each $\ve{x}\in\mathcal{S}$ there is exactly one $\omega$-limit point $\lim_{i\rightarrow\infty}\ma{\Phi}(t_i;\ve{x})$ for all $(t_i)_{i=1}^\infty$ such that  $\lim_{i\rightarrow\infty}t_i=\infty$.
\end{definition}

\begin{definition}
An equilibrium point $\ve{y}\in\R^{n+1}$ of a dynamical system $\vd{x}=\ve{f}(\ve{x})$ is said to be exponentially unstable if the Jacobian matrix of $\ve{f}(\ve{x})$ evaluated at $\ve{y}$ has a strictly positive eigenvalue.
\end{definition}

For pointwise convergent systems, any set of exponentially unstable equilibria have a region of attraction with Riemannian measure zero \cite{freeman2013global}. The system \eqref{eq:flow} is pointwise convergent due to being a gradient descent flow of an analytic function on an analytic manifold  \cite{lageman2007convergence}. The problem of establishing almost global convergence has hence been reduced to showing that all equilibria besides those belonging to the consensus manifold are exponentially unstable.

\subsection{Positive eigenvalues}


\noindent Let $\ma{L}(\ve{x})\in\R^{N(n+1)\times N(n+1)}$ denote the linearization matrix of the gradient descent flow \eqref{eq:flow} at the point $\ve{x}\in\M^N$. Our aim is to show that the eigenvalues of $\ma{L}(\ve{x})$ are positive for all equilibria $\ve{x}\notin\mathcal{C}$. Note that $\ma{L}(\ve{x})$ is related to the Hessian matrix $\ma{H}(\ve{x})\in\R^{Nn\times Nn}$ of $V$ as $\ma{L}(\ve{x})=-\ma{H}(\ve{x})$ \cite{helmke2012optimization}.

\begin{proposition}\label{prop:eigenvalues}
Let $\M\subset\R^n$ be a hypersurface for which Assumption \ref{ass:ineq} holds. The eigenvalues of the linearization matrix of the gradient descent flow \eqref{eq:flow} have strictly negative real parts at any equilibrium point except for those belonging to the consensus manifold $\mathcal{C}$.
\end{proposition}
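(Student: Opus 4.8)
The plan is to analyze the linearization through the relation $\ma{L}(\ve{x})=-\ma{H}(\ve{x})$ recorded above, so that the claim that the eigenvalues of $\ma{L}(\ve{x})$ have strictly negative real parts is equivalent to the claim that the Hessian $\ma{H}(\ve{x})$ of $V$ is positive definite on $\ts[\M^N]{\ve{x}}$ at every equilibrium $\ve{x}\notin\mathcal{C}$. First I would fix such an equilibrium and compute the intrinsic Hessian of $V$ on the product manifold $\M^N$. Because each factor is cut out by $c(\ve[i]{x})=0$, differentiating the tangential gradient forces one to differentiate the projector $\ma[n+1]{I}-\ve[i]{n}\vet[i]{n}$, and this produces, on top of the Euclidean Hessian of $V$ with its graph-Laplacian block structure, a curvature correction carried by the second fundamental form of $\M$. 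This correction is exactly where the scalar $\Delta c(\ve[i]{x})-\langle\ve[i]{n},\nabla^2c(\ve[i]{x})\ve[i]{n}\rangle$ over $\|\nabla c(\ve[i]{x})\|^2$, paired with the normal residual $\langle\ve[i]{x}-\ve[j]{x},\nabla c(\ve[i]{x})\rangle$ coming from the equilibrium condition, enters the computation; these are precisely the quantities assembled in Assumption \ref{ass:ineq}.

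Next I would evaluate the quadratic form $\xi^{\top}\ma{H}(\ve{x})\,\xi$ and decompose it into a sum over the edges of $\G$, using that $V=\tfrac12\sum_{\{i,j\}\in\E}a_{ij}\|\ve[i]{x}-\ve[j]{x}\|^2$ is a sum of pairwise potentials. For a single edge $\{i,j\}$ the associated block should depend only on the two points $\ve{y}=\ve[i]{x}$ and $\ve{z}=\ve[j]{x}$ and their normals, and the decisive step is to recognize the coefficient governing this block as the left-hand side of the inequality in Assumption \ref{ass:ineq}. Since that inequality asserts the quantity is at least one, with equality only when $\ve{y}=\ve{z}$, each edgewise contribution is nonnegative and is strictly positive whenever $\ve{y}\neq\ve{z}$.

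Finally I would pass from these edgewise bounds to a spectral statement for the full $Nn\times Nn$ matrix. The main obstacle is the transition from a pointwise, pairwise inequality to global positive definiteness: summing positive semidefinite edge blocks does not by itself preclude a nonzero null vector, so I expect to rely on connectedness of $\G$. A candidate null direction of $\ma{H}(\ve{x})$ would make every edge contribution vanish, hence by the strict part of Assumption \ref{ass:ineq} force $\ve[i]{x}=\ve[j]{x}$ along each edge; connectedness then propagates this equality across the whole graph, placing $\ve{x}\in\mathcal{C}$ and contradicting $\ve{x}\notin\mathcal{C}$. With such directions excluded, $\ma{H}(\ve{x})$ is positive definite, and therefore $\ma{L}(\ve{x})=-\ma{H}(\ve{x})$ has eigenvalues with strictly negative real parts at every equilibrium that does not belong to the consensus manifold.
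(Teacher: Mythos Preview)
Your proposal targets the wrong sign. In context (see the paragraph just before this subsection and the definition of exponential instability), what the paper needs---and what its proof actually establishes---is that $\ma{L}(\ve{x})=-\ma{H}(\ve{x})$ has at least one strictly \emph{positive} eigenvalue at every equilibrium $\ve{x}\notin\mathcal{C}$, so that these equilibria are exponentially unstable and their basins have measure zero. The phrase ``strictly negative real parts'' in the proposition is a misstatement. If you succeeded in showing $\ma{H}(\ve{x})$ positive definite, every non-consensus equilibrium would be a strict local minimum of $V$ and hence asymptotically stable for the gradient flow, which would wreck the \AGAS{} conclusion rather than support it.

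The paper's argument is also structurally different from your edge-by-edge plan. It does not bound the full quadratic form $\langle\xi,\ma{H}(\ve{x})\xi\rangle$ for arbitrary $\xi$; instead it selects the single tangent direction $\ve{v}=(\Pi_1\ve{u},\ldots,\Pi_N\ve{u})$ with a common $\ve{u}\in\R^{n+1}$ (``perturb all agents the same way'') and reduces the Rayleigh quotient of $\ma{L}$ along $\ve{v}$ to $\langle\ve{u},\ma{M}\ve{u}\rangle$ for an $(n+1)\times(n+1)$ matrix $\ma{M}$. Choosing $\ve{u}$ as the normalized sum of the eigenvectors of $\ma{M}$ turns this into $\trace\ma{M}$ up to a positive factor, and after solving the first-order conditions for the multipliers $\lambda_i$ one finds
\[
\trace\ma{M}=\sum_{i\in\V}\sum_{j\in\Ni}a_{ij}\Bigl[\langle\ve[i]{n},\ve[j]{n}\rangle^2+\tfrac{\langle\ve[i]{x}-\ve[j]{x},\nabla_i c\rangle(\Delta_ic-\langle\ve[i]{n},\nabla_i^2c\,\ve[i]{n}\rangle)}{\|\nabla_ic\|^2}-1\Bigr].
\]
Assumption~\ref{ass:ineq} makes each bracket nonnegative, with strict positivity for at least one edge since $\ve{x}\notin\mathcal{C}$ and $\G$ is connected; hence $\trace\ma{M}>0$ and $\alpha(\ve{x})>0$. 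The scalar inequality in Assumption~\ref{ass:ineq} is tailored precisely to this trace computation; it does not assert---and is not strong enough to assert---that each edgewise block of $\ma{H}$ is semidefinite in every tangent direction, which your decomposition would require. Your null-vector step has the same gap: vanishing of an edge contribution for a particular $\xi$ need not force $\ve[i]{x}=\ve[j]{x}$, since it can equally well come from the tangent components $\xi_i,\xi_j$.
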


\begin{proof}
The equilibria of the gradient descent flow \eqref{eq:flow} are critical points of the optimization problem
\begin{align}\label{eq:min}
\min_{\ve{x}\in\M^N}V(\ve{x})=\tfrac12\sum_{\{i,j\}\in\E}a_{ij}\|\ve[j]{x}-\ve[i]{x}\|^2.
\end{align}
We will analyze \eqref{eq:flow} in an optimization framework, making use of the associated techniques and terminology. Our approach is based on the Lagrange conditions for optimality in equality constrained nonlinear programming  \cite{nocedal1999numerical}.


Introduce the Lagrangian $\mathcal{L}:\M^N\times\R\rightarrow\R$ given by
\begin{align*}
\mathcal{L}(\ve{x},\ve{\lambda})&=V+\sum_{i\in\V}\lambda_i c(\ve[i]{x})\\
&=\tfrac12\sum_{\{i,j\}\in\mathcal{E}}a_{ij}\|\ve[i]{x}-\ve[j]{x}\|^2+ \sum_{i\in\V}\lambda_i c(\ve[i]{x}),
\end{align*}
where $\ve{\lambda}=[\lambda_i]\in\R^N$. The optimal solutions to \eqref{eq:min} are critical points of $\mathcal{L}$. The critical points of $\mathcal{L}$ are exactly the  eigenvalues of $\eqref{eq:flow}$. Calculate the Euclidean gradient of  $\mathcal{L}$,
\begin{align*}
\nabla_i\mathcal{L}&=\sum_{j\in\Ni}a_{ij}(\ve[i]{x}-\ve[j]{x})+\lambda_i\nabla_i c(\ve[i]{x}),\\
\tfrac{\partial}{\partial\lambda_i}\mathcal{L}&=c(\ve[i]{x}).
\end{align*}
The Hessian of $\mathcal{L}$ with respect to $\ve[i]{x}$, $\ve[k]{x}$ is a $N(n+1)\times N(n+1)$ block matrix $\nabla^2\mathcal{L}$, with the $ki$ block given by
\begin{align*}
(\nabla^2\mathcal{L})_{ki}=\begin{cases}\sum_{j\in\Ni}a_{ij}\ma[n+1]{I}+\lambda_i\nabla_i^2 c(\ve[i]{x}) & \textrm{ if }k=i,\\
-a_{ik}\ma[n+1]{I} & \textrm{ if }k\in\Ni,\\
\ma{0} & \textrm{ otherwise.}
\end{cases}
\end{align*}

The nullspace $\ker\nabla c_i$ of the constraint gradients is the image set of the symmetric matrix
\begin{align*}
\ma[i]{Z}=\ma[n+1]{I}-\ve[i]{n}\vet[i]{n},
\end{align*}
where $\ve[i]{n}=\ve{n}(\ve[i]{x})$ and $\ve{n}$ is the Gauss map. Let $\ma{Z}$ denote the blockdiagonal matrix with $\ma[i]{Z}$
as the $ii$ block. Form the matrix $\ma{H}(\ve{x})=\ma{Z}\nabla^2\mathcal{L}\ma{Z}$ whose $ki$ block is 
\begin{align*}
\ma[k]{Z}\tfrac{\partial^2\mathcal{L}}{\partial \ve[k]{x}\partial \ve[i]{x}}\ma[i]{Z}=\begin{cases}\sum_{j\in\Ni}a_{ij}\ma[i]{Z}+\lambda_i\ma[i]{Z}\nabla_i^2 c\ma[i]{Z} & \textrm{ if }k=i,\\
-a_{ki}\ma[k]{Z}\ma[i]{Z} & \textrm{ if }k\in\Ni,\\
\ma{0} & \textrm{ otherwise},
\end{cases}
\end{align*}
where we used that $\ma[i]{Z}^2=\ma[i]{Z}$, which follows from  $\ma[i]{Z}$ being a projection matrix. 

Let $\mathsf{T}\M^N$ denote the tangent bundle of $\M^N$,
\begin{align*}
	\mathsf{T}\M^N=\{(\ve{x},\ve{v})\,|\,\ve{x}\in\M^N,\ve{v}\in\ts[\M^N]{\ve{x}}\}.
\end{align*}
The matrix $\ma{H}(\ve{x})$ is the Riemannian Hessian operator $\ma{H}(\ve{x}):\mathsf{T}\M^N\rightarrow\ts[\M]{\ve{x}}:(\ve{x},\ve{v})\mapsto\ma{H}(\ve{x})\ve{v}$ of $V$ on $\M$ \cite{birtea2015hessian}. It also appears in the necessary second order optimality conditions for equality constrained problems, with $\ma{H}(\ve{x})$ being positive semi-definite on $\mathsf{T}\M^N$ if $\ve{x}$ is an optimal solution to \eqref{eq:min} that satisfies some additional requirements \cite{nocedal1999numerical}.

Let $\ma{L}(\ve{x})=-\ma{H}(\ve{x})$ be the linearization matrix of the gradient descent flow \cite{helmke2012optimization}. Note that $\ma{L}$ is symmetric wherefore its field of values
\begin{align*} W(\ma{L})=\{\langle\ve{v},\ma{L}\ve{v}\rangle\,|\,\ve{v}\in\C^{Nn}\}=\{\langle\ve{v},\ma{L}\ve{v}\rangle\,|\,\ve{v}\in\R^{Nn}\}
\end{align*}
is real. Consider the Rayleigh quotient  $R:\mathsf{T}\M^N\rightarrow\R$ given by $R(\ve{x},\ve{v})=\langle\ve{v},\ma{L}(\ve{x})\ve{v}\rangle/\langle\ve{v},\ve{v}\rangle$. Let $\alpha(\ve{x})$ denote the spectral abscissa of $\ma{L}(\ve{x})$,  
\begin{align*}
\alpha(\ve{x})=\max_{\ve{v}\in\ts[\M^N]{\ve{x}}} R(\ve{x},\ve{v}).
\end{align*}

Since $\ma{L}(\ve{x})$ is symmetric, $\alpha(\ve{x})$ equals the largest eigenvalue of $\ma{L}(\ve{x})$. It is bounded below as $ \alpha(\ve{x})\geq R(\ve{x},\ve{v})$  for all $\ve{v}\in\ts[\M^N]{\ve{x}}$ by the min-max theorem. It follows that $-R(\ve{x},\ve{v})$ is an upper bound on the smallest eigenvalue of $\ma{H}(\ve{x})$. If $R(\ve{x},\ve{v})$ assumes a positive value for some argument, then the $\ma{H}(\ve{x})$ cannot be positive definite and the necessary optimality conditions fails to hold.

To obtain a lower bound for $\alpha(\ve{x})$, consider the tangent vector $\ve{v}=[\Pi_1\ve{u}\ldots\Pi_N\ve{u}]=[\ma[1]{Z}\ve{u}\ldots\ma[N]{Z}\ve{u}]$ for any $\ve{u}\in\R^n$ such that $\|\ve{v}\|=1$. The intuition for this step is that all agents are located at some equilibrium $\ve{x}$ and that we perturb all of them in the same direction, \ie towards the consensus manifold. Because all agents move towards the same region of the consensus manifold, it is possible that cohesion is increased whereby $V$ decreases. We calculate the effect this has on the quadratic term in the Taylor expanasion of $V$, \ie the term that depends on $\ma{H}(\ve{x})=-\ma{L}(\ve{x})$. The contribution of the linear term is zero due to $\nabla V=\ve{0}$ at any equilibrium. 

Calculate the Rayleigh quotient,
\begin{align*}
R(\ve{x},\ve{v})
&=\sum_{i\in\V}\langle\ve{u},(\ma[ii]{L}(\ve{x})+\sum_{j\in\Ni}\ma[ij]{L})\ve{u}\rangle\\
&=\sum_{i\in\V}\langle\ve{u},-(\lambda_i\ma[i]{Z}\nabla_i^2c\ma[i]{Z}+\!\!\sum_{j\in\Ni}a_{ij}(\ma[i]{Z}-\ma[i]{Z}\ma[j]{Z}))\ve{u}\rangle.
\end{align*}
Denote
\begin{align*}
\ma{M}(\ve{x})&=-\sum_{i\in\V}\lambda_i\ma[i]{Z}\nabla_i^2c\ma[i]{Z}+\sum_{j\in\Ni}a_{ij}(\ma[i]{Z}-\ma[i]{Z}\ma[j]{Z}).
\end{align*}
Hence $R(\ve{x},\ve{v})=\langle\ve{u},\ma{M}\ve{u}\rangle$. Let $(\mu_i,\ve[i]{u})$, where $\|\ve[i]{u}\|=1$, denote the eigenpairs of $\ma{M}$. Take $\ve{u}=\sum_{i=1}^{n+1}\ve[i]{u}/\|\sum_{i=1}^{n+1}\ve[i]{u}\|$ whereby $\ve{v}=\ma{Z}\ve{u}$ satisfies $\ve{v}\in\ts[\M]{\ve[i]{x}}$, $\|\ve{v}\|=1$ (note that $\sum_{i=1}^{n+1}\ve[i]{u}\neq\ve{0}$ by linear independence). Then 
\begin{align*}
R(\ve{x},\ve{v})=\frac{\trace\ma{M}(\ve{x})}{\|\sum_{i=1}^n\ve[i]{u}\|}.
\end{align*}
It remains to show that $\trace\ma{M}(\ve{x})\geq0$.

For the sake of notational convenience, write
\begin{align*}
\ma[i]{N}=\ve[i]{n}\vet[i]{n}=\tfrac{\nabla_i c(\ve[i]{x})}{\|\nabla_i c(\ve[i]{x})\|}(\tfrac{\nabla_i c(\ve[i]{x})}{\|\nabla_i c(\ve[i]{x})\|})\mtr.
\end{align*}
whereby $\ma[i]{Z}=\ma[n]{I}-\ma[i]{N}$. Rewrite
\begin{align*}
\ma{M}={}&-\sum_{i\in\V}\lambda_i(\ma[n]{I}-\ma[i]{N})\nabla_i^2c(\ma[n]{I}-\ma[i]{N})+\\
&\sum_{j\in\Ni}a_{ij}(\ma[n]{I}-\ma[i]{N}-(\ma[n]{I}-\ma[i]{N})(\ma[n]{I}-\ma[j]{N}))\\
={}&-\sum_{i\in\V}\lambda_i(\nabla_i^2c-\ma[i]{N}\nabla_i^2c-\nabla_i^2c\ma[i]{N}+\ma[i]{N}\nabla_i^2c\ma[i]{N})+\\
&\sum_{j\in\Ni}a_{ij}(\ma[j]{N}-\ma[i]{N}\ma[j]{N}).
\end{align*}

Solve $\nabla_i\mathcal{L}=\ve{0}$ for
\begin{align*}
\lambda_i=\tfrac{1}{\|\nabla_i c\|^2}\sum_{j\in\Ni}a_{ij}\langle\nabla_i c(\ve[i]{x}),\ve[j]{x}-\ve[i]{x}\rangle.
\end{align*}
Note that $\lambda_i$ is well-defined since $\M$ is nonsingular by assumption, which implies $\nabla_i c\neq\ve{0}$. Note that $\trace\ma[i]{N}=1$. Let $\Delta_i c= \trace\nabla_i^2c(\ve[i]{x})$ denote the Laplace-Beltrami operator acting on $c$. Calculate
\begin{align*}
\trace\ma{M}={}&-\sum_{i\in\V}\lambda_i(\Delta_ic-\langle\ve[i]{n},\nabla_i^2c\ve[i]{n}\rangle)+\\
&\sum_{j\in\Ni}a_{ij}(1-\langle\ve[i]{n},\ve[j]{n}\rangle^2)\\
={}&\sum_{i\in\V}\sum_{j\in\Ni}a_{ij}[-1+\langle\ve[i]{n},\ve[j]{n}\rangle^2+\\
&\tfrac{1}{\|\nabla_ic\|^2}\langle \ve[i]{x}-\ve[j]{x},\nabla_i c\rangle(\Delta_ic-\langle\ve[i]{n},\nabla_i^2c\ve[i]{n}\rangle)]
\end{align*}

The sum in the expression for $\trace\ma{M}$ is positive if every term is positive, \ie if
\begin{align*}
\langle\ve[i]{n},\ve[j]{n}\rangle^2+\tfrac{\langle\ve[i]{x}-\ve[j]{x},\nabla_i c\rangle(\Delta_ic-\langle\ve[i]{n},\nabla_i^2c\ve[i]{n}\rangle)}{\|\nabla_ic\|^2}\geq1
\end{align*}
with equality only when $\ve[i]{x}=\ve[j]{x}$. This relation holds by Assumption \ref{ass:ineq} on the geometry of $\M$.\end{proof}

\section{Convexity}

\subsection{Convex sets}

\noindent Assumption \ref{ass:ineq} allows for a geometric interpretation. Recall that by the Jordan-Brouwer separation theorem, a compact hypersurface $\M$ separates $\R^n$ into two connected sets, one interior set which is bounded, $\mathcal{K}$, and one exterior set which is unbounded $(\mathcal{K}\cup\mathcal{M})^\mathsf{c}$. The inequality in Assumption \ref{ass:ineq} implies that $\mathcal{K}$ is convex, \ie that $\M=\partial\mathcal{K}$ is the boundary of a convex set. To show this, we first need a lemma.

\begin{lemma}\label{le:dist}
Let $\mathcal{M}\subset\R^n$ be a nonsingular hypersurface given by $\M=\{\ve{y}\in\R^n\,|\,c(\ve{y})=0\}$, where $c$ is $\mathcal{C}^1$. Take any $\ve{z}\in\R^n$. The vector $\ve{v}$ of shortest length $\|\ve{v}\|$ such that $\ve{y}+\ve{v}=\ve{z}$ for some $\ve{y}\in\M$ is parallel to the normal of $\M$ given by $\nabla c(\ve{y})$.
\end{lemma}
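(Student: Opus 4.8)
The plan is to characterize the shortest connecting vector as a constrained minimizer and apply the Lagrange (or KKT) first-order optimality conditions. Fix $\ve{z}\in\R^n$ and consider the optimization problem $\min_{\ve{y}\in\M}\tfrac12\|\ve{z}-\ve{y}\|^2$, equivalently $\min\tfrac12\|\ve{z}-\ve{y}\|^2$ subject to the single equality constraint $c(\ve{y})=0$. Since $\M$ is a nonsingular hypersurface, $\nabla c(\ve{y})\neq\ve{0}$ for every $\ve{y}\in\M$, so the linear independence constraint qualification holds at every feasible point and the first-order Lagrange conditions are necessary at any local minimizer.

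First I would argue that a minimizer exists. If $\M$ is compact (as in the closed-hypersurface setting of the paper) this is immediate by continuity of $\ve{y}\mapsto\|\ve{z}-\ve{y}\|^2$ on a compact set; more generally, intersecting $\M$ with a large closed ball around $\ve{z}$ that contains at least one point of $\M$ gives a nonempty compact set on which the distance attains its infimum, and points outside that ball are farther away, so a global minimizer $\ve{y}\in\M$ exists. Write $\ve{v}=\ve{z}-\ve{y}$, the shortest connecting vector.

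Next I would write down the stationarity condition. Forming the Lagrangian $\tfrac12\|\ve{z}-\ve{y}\|^2+\lambda\, c(\ve{y})$ and differentiating with respect to $\ve{y}$ gives $-(\ve{z}-\ve{y})+\lambda\nabla c(\ve{y})=\ve{0}$, i.e.
\begin{align*}
\ve{v}=\ve{z}-\ve{y}=\lambda\,\nabla c(\ve{y})
\end{align*}
for some scalar multiplier $\lambda\in\R$. This is exactly the assertion that $\ve{v}$ is parallel to the normal $\nabla c(\ve{y})$ of $\M$ at $\ve{y}$. (If $\ve{z}\in\M$ one may take $\ve{y}=\ve{z}$, $\ve{v}=\ve{0}$, and the statement is vacuous; otherwise $\lambda\neq0$.)

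The only real subtlety — and the step I would be most careful about — is justifying the use of the Lagrange multiplier rule, namely that the minimizer $\ve{y}$ is a genuinely unconstrained-like critical point of the Lagrangian rather than a boundary artifact. This is handled precisely by the nonsingularity hypothesis on $\M$, which supplies the constraint qualification; alternatively, one can give a self-contained argument: for any smooth curve $\gamma(t)\subset\M$ with $\gamma(0)=\ve{y}$, the function $t\mapsto\tfrac12\|\ve{z}-\gamma(t)\|^2$ has a minimum at $t=0$, so its derivative $-\langle\ve{z}-\ve{y},\dot\gamma(0)\rangle=-\langle\ve{v},\dot\gamma(0)\rangle$ vanishes; hence $\ve{v}\perp\ts[\M]{\ve{y}}$, and since $\ts[\M]{\ve{y}}$ is the hyperplane orthogonal to $\nabla c(\ve{y})$, the vector $\ve{v}$ must be parallel to $\nabla c(\ve{y})$. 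Either route closes the proof.
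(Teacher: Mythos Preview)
Your proof is correct and follows essentially the same approach as the paper: cast the problem as $\min \tfrac12\|\ve{z}-\ve{y}\|^2$ subject to $c(\ve{y})=0$, invoke nonsingularity of $\M$ to justify the Lagrange conditions, and read off $\ve{v}=\ve{z}-\ve{y}=\lambda\nabla c(\ve{y})$. You are simply more thorough, adding an existence argument for the minimizer and an alternative tangent-curve justification that the paper omits.
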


\begin{proof}
The Lagrange conditions for optimality in the nonlinear optimization problem
\begin{align*}
\min_{y\in\R^n} \tfrac12\|\ve{z}-\ve{y}\|^2\textrm{ subject to } c(\ve{y})=0
\end{align*}	
are necessary since $\M$ is nonsingular (\ie all points on $\M$ are regular). Form the Lagrangian function $\mathcal{L}(\ve{y},\lambda)=\|\ve{z}-\ve{y}\|^2+\lambda c(\ve{y})$. The Lagrange conditions state that
\begin{align*}
	\ve{z}-\ve{y}+\lambda\nabla c(\ve{y})=\ve{0}
\end{align*}
from which it follows that $\ve{v}=\ve{z}-\ve{y}=-\lambda\nabla c(\ve{y})$.
\end{proof} 

\begin{theorem}\label{prop:convex}
Suppose Assumption \ref{ass:ineq} holds and that $\M$ is a closed manifold, then $\M$ is the boundary of a convex set.
\end{theorem}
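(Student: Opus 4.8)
The plan is to show that convexity of the interior set $\mathcal{K}$ is equivalent to a curvature condition, and then to extract that condition from Assumption~\ref{ass:ineq} by a suitable limiting argument. Recall that a compact hypersurface $\M=\partial\mathcal{K}$ bounding a set $\mathcal{K}$ is the boundary of a convex set if and only if its second fundamental form (with respect to the inward normal, say) is everywhere positive semidefinite; equivalently, the tangent hyperplane at every point $\ve{y}\in\M$ is a supporting hyperplane, so that $\M$ lies entirely on one side of it. So the goal reduces to establishing a sign condition on the shape operator of $\M$.

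First I would rewrite the quantity appearing in Assumption~\ref{ass:ineq} in terms of the standard hypersurface invariants. Using $\ve{n}=\nabla c/\|\nabla c\|$, a direct computation gives that $\bigl(\Delta c(\ve{y})-\langle\ve{n}(\ve{y}),\nabla^2c(\ve{y})\ve{n}(\ve{y})\rangle\bigr)/\|\nabla c(\ve{y})\|$ is (up to sign) the trace of the Weingarten/shape operator of $\M$ at $\ve{y}$, i.e. $n$ times the mean curvature $H(\ve{y})$. Thus the inequality reads, after a factor of $\|\nabla c(\ve{y})\|$,
\begin{align*}
\langle\ve{n}(\ve{y}),\ve{n}(\ve{z})\rangle^2 - 1 \geq -\,\langle\ve{y}-\ve{z},\ve{n}(\ve{y})\rangle\, n\,H(\ve{y}),
\end{align*}
for all $\ve{y},\ve{z}\in\M$, with equality only when $\ve{y}=\ve{z}$. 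The left side is $\leq 0$ always, vanishing iff $\ve{n}(\ve{z})=\pm\ve{n}(\ve{y})$.

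Next I would exploit Lemma~\ref{le:dist}. Fix $\ve{y}\in\M$ and consider a point $\ve{z}_0=\ve{y}-\varepsilon\,\ve{n}_{\mathrm{in}}(\ve{y})$ slightly inside $\mathcal{K}$ along the inward normal. Then $\ve{y}$ realizes the distance from $\ve{z}_0$ to $\M$ (for small $\varepsilon$), and by the lemma the closest point of $\M$ to a moving point has normal parallel to the connecting vector. The standard argument is: if the second fundamental form were negative in some tangent direction at $\ve{y}$, then points just inside along the normal would have a strictly closer point on $\M$ than $\ve{y}$ itself, contradicting that $\ve{y}$ is the unique nearest point; conversely, once one knows every normal line segment from an interior point hits $\M$ at a unique nearest point, one recovers that $\M$ lies on one side of each tangent plane. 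I would therefore argue by contradiction: assume $\M$ is \emph{not} convex, so there is $\ve{y}\in\M$ and a tangent direction along which $\M$ curves the "wrong" way; pick $\ve{z}$ on $\M$ near $\ve{y}$ in roughly that direction and perform a second-order Taylor expansion of both sides of the displayed inequality. The left side behaves like $-(\text{second fundamental form})^2\cdot|\ve{y}-\ve{z}|^2 + O(|\ve{y}-\ve{z}|^3)$ while $\langle\ve{y}-\ve{z},\ve{n}(\ve{y})\rangle$ is itself $O(|\ve{y}-\ve{z}|^2)$ and its sign is governed by the second fundamental form in the chosen direction; combining these, the "equality only if $\ve{y}=\ve{z}$" clause forces the shape operator to be positive semidefinite at $\ve{y}$. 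Since $\ve{y}$ was arbitrary, $\M$ is the boundary of a convex set, possibly after appealing to the fact that a connected closed hypersurface with nonnegative second fundamental form everywhere bounds a convex body (Hadamard-type theorem).

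The main obstacle I anticipate is the bookkeeping in the second-order expansion: one must carefully track the sign of $\langle\ve{y}-\ve{z},\nabla c(\ve{y})\rangle$ — which is inward versus outward — against the sign convention for $H$ built into $c$ (recall $\nabla c$ points to the unbounded exterior region), and verify that the cross term $-\langle\ve{y}-\ve{z},\ve{n}(\ve{y})\rangle\,nH(\ve{y})$ does not accidentally dominate the nonpositive left side in a way that would let a nonconvex hypersurface still satisfy the inequality. Getting this sign analysis right, and confirming that the strict "equality only if $\ve{y}=\ve{z}$" hypothesis is exactly what rules out a degenerate (zero curvature in some direction with higher-order wrong-way bending) configuration, is the crux; the rest is the classical differential-geometric characterization of convex hypersurfaces.
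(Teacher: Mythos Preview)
Your route via local curvature and a Hadamard-type theorem differs substantially from the paper's, which is global and elementary: no Taylor expansions, no shape operator, no classification theorems. The paper's key observation is one you actually set up and then walk past. Rearranging Assumption~\ref{ass:ineq} gives
\begin{align*}
\langle\ve{y}-\ve{z},\nabla c(\ve{y})\rangle\,\bigl(\Delta c(\ve{y})-\langle\ve{n}(\ve{y}),\nabla^2c(\ve{y})\ve{n}(\ve{y})\rangle\bigr)\;\geq\;\|\nabla c(\ve{y})\|^{2}\bigl(1-\langle\ve{n}(\ve{y}),\ve{n}(\ve{z})\rangle^{2}\bigr)\;\geq\;0.
\end{align*}
The second factor on the left depends only on $\ve{y}$, so the sign of $\langle\ve{y}-\ve{z},\nabla c(\ve{y})\rangle$ cannot change as $\ve{z}$ ranges over $\M$. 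One test value of $\ve{z}$ (follow the inward normal from $\ve{y}$ across the interior region until it hits $\M$ again) pins that sign down as nonnegative. This is exactly the supporting-hyperplane condition $\langle\ve{y}-\ve{z},\nabla c(\ve{y})\rangle\geq 0$ for all $\ve{y},\ve{z}\in\M$. The paper then intersects the closed half-spaces $\mathcal{H}(\ve{y})=\{\ve{w}:\langle\ve{y}-\ve{w},\nabla c(\ve{y})\rangle\geq 0\}$ over all $\ve{y}\in\M$ to obtain a convex set $\mathcal{S}$, and uses Lemma~\ref{le:dist} once to identify $\mathcal{S}$ with the bounded interior region $\mathcal{K}$.

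By contrast, your second-order expansion yields only a pointwise curvature inequality, from which you must still argue that the second fundamental form is globally nonnegative (in particular you need a connectivity argument, not supplied, to rule out regions where the form and $H$ are simultaneously nonpositive) and then invoke a Hadamard-type theorem. None of this is necessary: the supporting-hyperplane inequality is already present in the assumption for \emph{arbitrary}, not merely nearby, $\ve{z}$, and the sign bookkeeping you flag as the crux disappears once you exploit the factorization above.
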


\begin{proof}
Note that in order for Assumption \ref{ass:ineq} to hold, since $\langle\ve{n}(\ve{y}),\ve{n}(\ve{z})\rangle^2\leq1$, it is necessary that $\langle\ve{y}-\ve{z},\nabla{c}(\ve{y})\rangle$ and $\Delta c(\ve{y})-\langle\ve{n}(\ve{y}),\nabla^2c(\ve{y})\ve{n}(\ve{y})\rangle$ have the same sign. The latter expression only depends on $\ve{y}$ wherefore the sign of $\langle\ve{y}-\ve{z},\nabla{c}(\ve{y})\rangle$ cannot vary with $\ve{z}$, \ie either 
\begin{align}\label{eq:convex1}
\langle\ve{y}-\ve{z},\nabla c(\ve{y})\rangle\geq0
\end{align}
or
\begin{align}\label{eq:convex2}
\langle\ve{y}-\ve{z},\nabla c(\ve{y})\rangle\leq0
\end{align}
holds for all $\ve{z}\in\M$ at any $\ve{y}\in\M$. 

Recall that we have chosen the sign of $c$ such that for all $\ve{y}\in\M$, $\nabla c(\ve{y})$  points towards the exterior of the two sets separated by $\M$. Let $\mathcal{K}$ denote the interior set. Following the negative normal $-\nabla c(\ve{y})$ on a line segment from $\ve{y}$ through the interior set $\mathcal{K}$, we find another point $\ve{z}\in\M$ (otherwise the interior set would be unbounded). Note that $\ve{y}-\ve{z}$ is aligned with the normal at $\ve{y}$. Hence $\langle\ve{y}-\ve{z},\nabla c(\ve{y})\rangle\geq0$ wherefore we can exclude the case of \eqref{eq:convex2}. 

By \eqref{eq:convex1}, for each $\ve{y}\in\M$, there is an affine hyperplane through $\ve{y}$ with normal $\nabla c(\ve{y})$. This hyperplane divides $\R^{n+1}$ into a closed set containing $\M$ and an open set which is disjoint from $\M$. Let $\mathcal{H}(\ve{y})$ denote the closed half-space which contains $\M$, \ie
\begin{align*}
\mathcal{H}(\ve{y})=\{\ve{w}\in\R^n\,|\,\langle\ve{y}-\ve{w},\nabla c(\ve{y})\rangle\geq0\}.
\end{align*}
Form
\begin{align*}
\mathcal{S}=\cap_{\ve{y}\in\M}\mathcal{H}(\ve{y}).
\end{align*}
Since $\mathcal{S}$ is an intersection of convex sets, it is convex. We will show that $\mathcal{K}=\mathcal{S}$.

Since $\M\subset\mathcal{H}(\ve{y})$ for all $\ve{y}\in\M$, it follows that $\mathcal{M}\subset\mathcal{S}$. Hence $\mathcal{K}\subset\mathcal{S}$. To show $\mathcal{S}\subset\mathcal{K}$, suppose by way of contradiction that there is a $\ve{s}\in\mathcal{S}\backslash\mathcal{K}$. There is a point $\ve{y}\in\M$ which minimizes the Euclidean distance to $\ve{s}$. By Lemma \ref{le:dist}, this point satisfies $\ve{s}=\ve{y}+\lambda\nabla c(\ve{y})$ for some $\lambda\in\R$. Because $\ve{s}\notin\mathcal{K}$ and $\nabla c(\ve{y})$ points away from $\mathcal{K}$ at $\ve{y}$, it must be the case that $\lambda>0$. Then 
\begin{align*}
\langle\ve{y}-\ve{s},\nabla c(\ve{y})\rangle=-\lambda\|\nabla c(\ve{y})\|^2<0.
\end{align*}
This implies that $\ve{s}\notin\mathcal{H}(\ve{y})$ and hence $\ve{s}\notin\mathcal{S}$, which contradicts the assumption that $\ve{s}\in\mathcal{S}\backslash\mathcal{K}$.\end{proof}


\subsection{Strongly convex functions}

\noindent Conversely, we could assume that $c$ is a convex function on all of $\R^n$. However, $c$ being convex does not imply that Assumption \ref{ass:convex} holds. A counter example is given by $c:\R^2\rightarrow\R:\ve{x}\mapsto \|\ve{x}\|^2-r^2$, which yields the Kuramoto model on $\mathcal{S}^1$. Consider the class of strongly convex functions. A strongly convex function $f$ with parameter $m$ satisfies
\begin{align*}
f(\ve{z})\geq f(\ve{y})+\langle\ve{z}-\ve{y},\nabla f(\ve{y})\rangle+\tfrac{m}{2}\|\ve{z}-\ve{y}\|^2
\end{align*}
at all points $\ve{y}$, $\ve{z}$ in its domain. That $c$ is strongly convex on $\M$ implies
\begin{align*}
\langle\ve{y}-\ve{z},\nabla c(\ve{y})\rangle\geq \tfrac{m}{2}\|\ve{z}-\ve{y}\|^2.
\end{align*}
Equivalently, any continuous function $f$ on a compact domain is strongly convex if  $m\ma{I}\preceq\nabla^2f(\ve{x})\preceq M\ma{I}$. 

\begin{assumption}\label{ass:convex}
Let $c$ be a strongly convex function, 
\begin{align*}
m\ma{I}\preceq\nabla^2 c(\ve{y})\preceq M\ma{I}. 
\end{align*}
Moreover, suppose that $c$ satisfies
\begin{align*}
	\tfrac{m((n+1)m-M)}{(LK)^2}\geq2,
\end{align*}
where $n=\dim\M$, $L$ is a global Lipschitz constant of the Gauss map $\ve{n}:\M\rightarrow\mathcal{S}^{n-1}$, \ie
\begin{align*}
	\|\ve{n}(\ve{y})-\ve{n}(\ve{z})\|\leq L\|\ve{y}-\ve{z}\|,
\end{align*}
for all $\ve{y},\ve{z}\in\M$, and $K=\max_{\ve{y}\in\M}\|\nabla c(\ve{y})\|$.
\end{assumption}

\begin{proposition}\label{le:convex}
Assumption \ref{ass:convex} implies Assumption \ref{ass:ineq}.
\end{proposition}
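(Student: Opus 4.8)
The plan is to bound each of the two terms in Assumption \ref{ass:ineq} separately using the strong convexity hypothesis, and then show their sum exceeds $1$. For the first term, $\langle\ve{n}(\ve{y}),\ve{n}(\ve{z})\rangle^2$, I would write $\langle\ve{n}(\ve{y}),\ve{n}(\ve{z})\rangle=1-\tfrac12\|\ve{n}(\ve{y})-\ve{n}(\ve{z})\|^2\geq1-\tfrac{L^2}{2}\|\ve{y}-\ve{z}\|^2$ using that both are unit vectors and the Lipschitz property of the Gauss map. Squaring (and using that for $\ve{y}$ close to $\ve{z}$ this quantity is near $1$, so nonnegative) gives a lower bound of the form $1-L^2\|\ve{y}-\ve{z}\|^2+O(\|\ve{y}-\ve{z}\|^4)$; more carefully, $\langle\ve{n}(\ve{y}),\ve{n}(\ve{z})\rangle^2\geq1-L^2\|\ve{y}-\ve{z}\|^2$ should hold directly whenever the inner product is in $[0,1]$, since $t^2\geq 1-2(1-t)$ for $t\in[0,1]$ and $1-t\leq\tfrac{L^2}{2}\|\ve{y}-\ve{z}\|^2$.

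For the second term I would proceed as follows. The denominator satisfies $\|\nabla c(\ve{y})\|^2\leq K^2$ by definition of $K$. For the numerator, strong convexity of $c$ on $\M$ gives $\langle\ve{y}-\ve{z},\nabla c(\ve{y})\rangle\geq\tfrac{m}{2}\|\ve{z}-\ve{y}\|^2$ as recorded just before Assumption \ref{ass:convex}; note this is nonnegative. The factor $\Delta c(\ve{y})-\langle\ve{n}(\ve{y}),\nabla^2c(\ve{y})\ve{n}(\ve{y})\rangle$ is the trace of $\nabla^2 c(\ve{y})$ restricted to the tangent hyperplane $\ts[\M]{\ve{y}}$, an $n$-dimensional space; since $m\ma{I}\preceq\nabla^2c(\ve{y})\preceq M\ma{I}$, each eigenvalue of $\nabla^2c(\ve{y})$ lies in $[m,M]$, so this restricted trace is at least $nm$ — wait, more carefully: $\Delta c(\ve{y})=\trace\nabla^2c(\ve{y})\geq(n+1)m$ and $\langle\ve{n}(\ve{y}),\nabla^2c(\ve{y})\ve{n}(\ve{y})\rangle\leq M$, so the factor is at least $(n+1)m-M$, and this is positive by the standing hypothesis $\tfrac{m((n+1)m-M)}{(LK)^2}\geq2$ (which forces $(n+1)m-M>0$ since $m,L,K>0$). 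Combining, the second term is at least $\tfrac{m((n+1)m-M)}{2K^2}\|\ve{z}-\ve{y}\|^2$.

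Adding the two bounds, the left-hand side of Assumption \ref{ass:ineq} is at least
\begin{align*}
1-L^2\|\ve{y}-\ve{z}\|^2+\tfrac{m((n+1)m-M)}{2K^2}\|\ve{y}-\ve{z}\|^2=1+\Bigl(\tfrac{m((n+1)m-M)}{2K^2}-L^2\Bigr)\|\ve{y}-\ve{z}\|^2,
\end{align*}
and the coefficient of $\|\ve{y}-\ve{z}\|^2$ is nonnegative exactly when $\tfrac{m((n+1)m-M)}{(LK)^2}\geq2$, which is Assumption \ref{ass:convex}. This gives $\geq1$, with equality forcing $\|\ve{y}-\ve{z}\|^2=0$ (when the coefficient is strictly positive) i.e. $\ve{y}=\ve{z}$; the borderline case of equality in the assumption needs a brief separate remark, using that equality in the first-term bound already requires $\ve{y}=\ve{z}$ unless $\ve{n}(\ve{y})=\ve{n}(\ve{z})$, and one argues the strong-convexity term is then strictly positive unless $\ve{y}=\ve{z}$.

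\textbf{Main obstacle.} The delicate point is the first-term estimate: I need $\langle\ve{n}(\ve{y}),\ve{n}(\ve{z})\rangle^2\geq1-L^2\|\ve{y}-\ve{z}\|^2$ to hold globally, not just locally, and for that I must ensure the inner product is nonnegative (or handle the sign carefully when it is negative — in which case its square could still be bounded, but the clean inequality $t^2\ge 1-2(1-t)$ fails for $t<0$). One resolution is to observe that once $(n+1)m-M>0$, strong convexity forces $\M$ to be the boundary of a convex body with curvature bounded away from zero, and the second term already dominates: if $\|\ve{y}-\ve{z}\|$ is large enough that $\langle\ve{n}(\ve{y}),\ve{n}(\ve{z})\rangle<0$, then $\tfrac{m}{2}\|\ve{z}-\ve{y}\|^2$ in the numerator is correspondingly large and the second term alone exceeds $1$; if $\|\ve{y}-\ve{z}\|$ is small, $\langle\ve{n}(\ve{y}),\ve{n}(\ve{z})\rangle$ is close to $1$ and the clean bound applies. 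Making this case split quantitative — tying the threshold for sign change of the Gauss-map inner product to the constant $L$ — is the part that requires genuine care; everything else is bookkeeping with the spectral bounds on $\nabla^2 c$.
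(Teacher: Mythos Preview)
Your approach is correct and in fact more direct than the paper's. The ``main obstacle'' you flag is not an obstacle at all: the inequality $t^2\geq 1-2(1-t)$ rearranges to $(t-1)^2\geq 0$, which holds for \emph{every} real $t$, not only for $t\in[0,1]$. Hence your bound $\langle\ve{n}(\ve{y}),\ve{n}(\ve{z})\rangle^2\geq 1-L^2\|\ve{y}-\ve{z}\|^2$ is valid globally with no case split, and the argument in your first two paragraphs already constitutes a complete proof.

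The paper organises the same ingredients differently. Instead of keeping $\|\ve{y}-\ve{z}\|$ as the running variable and bounding the Gauss-map term downward, it applies the Lipschitz inequality in the \emph{other} direction, replacing $\|\ve{y}-\ve{z}\|^2$ in the second term by $\|\ve{n}(\ve{y})-\ve{n}(\ve{z})\|^2/L^2=2(1-\cos\vartheta)/L^2$, where $\vartheta$ is the angle between the two unit normals. This reduces Assumption~\ref{ass:ineq} to the single-variable inequality $g(\vartheta)=\cos^2\vartheta+\alpha(1-\cos\vartheta)\geq 1$ with $\alpha=\tfrac{m((n+1)m-M)}{(LK)^2}$, which is then minimised over $\vartheta\in[0,\pi]$ by elementary calculus to recover the threshold $\alpha\geq 2$. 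The two routes are equivalent in strength and produce the same constant; yours is shorter once the $(t-1)^2\geq 0$ identity is recognised, while the paper's change of variable makes the appearance of the threshold $2$ a bit more visible. Both treatments leave the equality case at $\alpha=2$ only loosely addressed.
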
	

\begin{proof} Consider the last term in the inequality of Assumption \ref{ass:ineq}. Strong convexity of $c$ implies that
\begin{align*}
\tfrac{\langle\ve{y}-\ve{z},\nabla c(\ve{y})\rangle(\Delta c(\ve{y})-\langle\ve{n}(\ve{y}),\nabla^2c(\ve{y})\ve{n}(\ve{y})\rangle)}{\|\nabla c(\ve{y})\|^2}\geq
\tfrac{m(nm-M)\|\ve{z}-\ve{y}\|^2}{2\|\nabla c(\ve{y})\|^2}.
\end{align*}
Since $\M$ is nonsingular by assumption, \ie $\nabla c(\ve{y})\neq\ve{0}$ for all $\ve{y}\in\M$, the Gauss map 
\begin{align*}
\ve{n}:\ve{y}\mapsto\tfrac{\nabla c(\ve{y})}{\|\nabla c(\ve{y})\|}
\end{align*}
is locally Lipschitz on $\M$.  Since $\M$ is a closed manifold there is a global Lipschitz constant $L$ of $\ve{n}$ over all points on $\M$. It follows that
\begin{align*}
\tfrac{m((n+1)m-M)\|\ve{z}-\ve{y}\|^2}{2\|\nabla c(\ve{y})\|^2}\geq
\tfrac{m((n+1)m-M)\|\ve{n}(\ve{z})-\ve{n}(\ve{y})\|^2}{2L^2K^2}
\end{align*}
where we also utilized the definition of $K$.

%

Let $\vartheta$ denote the angle between $\ve{n}(\ve{y})$ and $\ve{n}(\ve{z})$. For Assumption \eqref{ass:ineq} we find that
\begin{align*}
\langle\ve{n}(\ve{y}),\ve{n}(\ve{z})\rangle^2+\tfrac{\langle \ve{y}-\ve{z},\nabla c(\ve{y})\rangle(\Delta c(\ve{y})-\langle\ve{n}(\ve{y}),\nabla^2c(\ve{y})\ve{n}(\ve{y})\rangle)}{\|\nabla c(\ve{y})\|^2}&\geq\\
\cos^2\vartheta+\tfrac{m((n+1)m-M)\|\ve{z}-\ve{y}\|^2}{2K^2}&\geq\\
\cos^2\vartheta+\tfrac{m((n+1)m-M)\|\ve{n}(\ve{z})-\ve{n}(\ve{y})\|^2}{2L^2K^2}&=\\
\cos^2\vartheta+\tfrac{m((n+1)m-M)(1-\cos\vartheta)}{(LK)^2}&=\\
\cos^2\vartheta+\alpha(1-\cos\vartheta)&\geq1,
\end{align*}
where $\alpha=\tfrac{m((n+1)m-M)}{(LK)^2}$, if $\alpha$ is sufficiently large.

Denote $g(\vartheta,\alpha)=\cos^2\vartheta+\alpha(1-\cos\vartheta)$. We minimize this expression with respect to $\theta$ to find the range of $\alpha$ for which $g(\vartheta,\alpha)\geq1$ for all $\vartheta\in[0,\pi]$. Hence
\begin{align*}
\tfrac{\partial g(\vartheta,\alpha)}{\partial\vartheta}=-2\sin\vartheta\cos\vartheta+\alpha\sin\theta=0.
\end{align*}
Either $\sin\vartheta=0$ or $\cos\vartheta=\tfrac{\alpha}{2}$ for $\alpha\in[0,2]$. In the first case $\cos\theta\in\{-1,1\}$, which results in either $1+2\alpha\geq1$ or $1\geq1$. The condition on $\alpha$ is $\alpha\geq0$. In the second case 
\begin{align*}
g(\vartheta,\alpha)=\tfrac{\alpha^2}{4}+\alpha(1-\tfrac{\alpha}2)=\alpha-\tfrac{\alpha^2}{4}\geq1,
\end{align*}
which yields $\alpha\geq 2$. Hence we require $\tfrac{m((n+1)m-M)}{(LK)^2}\geq2$.\end{proof}





\bibliographystyle{unsrt}
\bibliography{autosam}

\providecommand{\noopsort}[1]{}\providecommand{\singleletter}[1]{#1}%
\begin{thebibliography}{10}

\bibitem{song2017intrinsic}
W.~Song, J.~Markdahl, S.~Zhang, X.~Hu, and Y.~Hong.
\newblock Intrinsic reduced attitude formation with ring inter-agent graph.
\newblock {\em Automatica}, 85:193--201, 2017.

\bibitem{al2018gradient}
S.~Al-Abri, W.~Wu, and F.~Zhang.
\newblock A gradient-free 3-dimensional source seeking strategy with robustness
  analysis.
\newblock {\em IEEE Transactions on Automatic Control}, 2018.

\bibitem{aydogdu2017opinion}
A.~Aydogdu, S.T. McQuade, and N.P. Duteil.
\newblock Opinion dynamics on a general compact {R}iemannian manifold.
\newblock {\em Networks \& Heterogeneous Media}, 12(3):489--523, 2017.

\bibitem{crnkic2018swarms}
A.~Crnki{\'c} and V.~Ja{\'c}imovi{\'c}.
\newblock Swarms on the 3-sphere with adaptive synapses: {H}ebbian and
  anti-{H}ebbian learning rule.
\newblock {\em Systems \& Control Letters}, 122:32--38, 2018.

\bibitem{ha2019second}
S.-Y. Ha and D.~Kim.
\newblock A second-order particle swarm model on a sphere and emergent
  dynamics.
\newblock {\em SIAM Journal on Applied Dynamical Systems}, 18(1):80--116, 2019.

\bibitem{sarlette2009consensus}
A.~Sarlette and R.~Sepulchre.
\newblock Consensus optimization on manifolds.
\newblock {\em SIAM Journal on Control and Optimization}, 48(1):56--76, 2009.

\bibitem{markdahl2018tac}
J.~Markdahl, J.~Thunberg, and J.~Gon\c{c}alves.
\newblock Almost global consensus on the $n$-sphere.
\newblock {\em IEEE Transactions on Automatic Control}, 63(6):1664--1675, 2018.

\bibitem{markdahl2019kuramoto}
J.~Markdahl.
\newblock Synchronization on {R}iemannian manifolds: Multiply connected implies
  multistable.
\newblock {\em arXiv preprint arXiv:1906.07452}, 2019.

\bibitem{markdahl2020high}
Johan Markdahl, Johan Thunberg, and Jorge Goncalves.
\newblock High-dimensional kuramoto models on stiefel manifolds synchronize
  complex networks almost globally.
\newblock {\em Automatica}, 113:108736, 2020.

\bibitem{zhu2013synchronization}
J.~Zhu.
\newblock Synchronization of {K}uramoto model in a high-dimensional linear
  space.
\newblock {\em Physics Letters A}, 377(41):2939--2943, 2013.

\bibitem{zhu2014high}
J.~Zhu.
\newblock High-dimensional {K}uramoto model limited on smooth curved surfaces.
\newblock {\em Physics Letters A}, 378(18-19):1269--1280, 2014.

\bibitem{zhang2019synchronisation}
J.~Zhang, Y.~Wang, and J.~Zhu.
\newblock Synchronisation of {L}ohe model on smooth curved surfaces.
\newblock {\em The Journal of Engineering}, 2019(22):8343--8347, 2019.

\bibitem{lima1988jordan}
E.L. Lima.
\newblock The {J}ordan-{B}rouwer separation theorem for smooth hypersurfaces.
\newblock {\em The American Mathematical Monthly}, 95(1):39--42, 1988.

\bibitem{khalil2002nonlinear}
\mbox{H.\um{}K.} Khalil.
\newblock {\em Nonlinear systems}.
\newblock Prentice Hall, 2002.

\bibitem{olfati2006swarms}
R.~Olfati-Saber.
\newblock Swarms on the sphere: A programmable swarm with synchronous behaviors
  like oscillator networks.
\newblock In {\em Proceedings of the 45th IEEE Conference on Decision and
  Control}, pages 5060--5066, 2006.

\bibitem{lohe2010quantum}
\mbox{M.A.} Lohe.
\newblock Quantum synchronization over quantum networks.
\newblock {\em Journal of Physics A: Mathematical and Theoretical},
  43(46):465301, 2010.

\bibitem{helmke2012optimization}
U.~Helmke and J.B. Moore.
\newblock {\em Optimization and Dynamical Systems}.
\newblock Springer, 2012.

\bibitem{kurdyka2000proof}
K.~Kurdyka, T.~Mostowski, and A.~Parusinski.
\newblock Proof of the gradient conjecture of {R}. {T}hom.
\newblock {\em Annals of Mathematics}, 152(3):763--792, 2000.

\bibitem{freeman2013global}
\mbox{R.\um{}A.} Freeman.
\newblock A global attractor consisting of exponentially unstable equilibria.
\newblock In {\em Proceedings of the 31st American Control Conference}, pages
  4855--4860, 2013.

\bibitem{lageman2007convergence}
C.~Lageman.
\newblock {\em Convergence of Gradient-Like Dynamical Systems and Optimization
  Algorithms}.
\newblock PhD thesis, University of W\"{u}rzburg, 2007.

\bibitem{nocedal1999numerical}
J.~Nocedal and S.J. Wright.
\newblock Numerical optimization.
\newblock {\em Springer}, 1999.

\bibitem{birtea2015hessian}
P.~Birtea and D.~Com{\u{a}}nescu.
\newblock Hessian operators on constraint manifolds.
\newblock {\em Journal of Nonlinear Science}, 25(6):1285--1305, 2015.

\end{thebibliography}

\end{document}